\documentclass[11pt]{article}

\usepackage[margin=1in]{geometry}
\usepackage[utf8]{inputenc}
\usepackage[T1]{fontenc}
\usepackage{lmodern}
\usepackage{amsmath,amssymb,amsfonts,amsthm,mathtools}
\usepackage{mathrsfs}
\usepackage{microtype}
\usepackage{enumitem}
\usepackage{booktabs}
\usepackage{tabularx}
\usepackage{longtable}
\usepackage{xcolor}
\usepackage[colorlinks=true,linkcolor=blue,citecolor=blue,urlcolor=blue]{hyperref}
\usepackage[capitalize,nameinlink]{cleveref}

\allowdisplaybreaks
\numberwithin{equation}{section}
\setlist[itemize]{topsep=3pt,itemsep=2pt,parsep=0pt}
\setlist[enumerate]{topsep=3pt,itemsep=2pt,parsep=0pt}

\newtheorem{theorem}{Theorem}[section]
\newtheorem{proposition}[theorem]{Proposition}

\newtheorem{definition}[theorem]{Definition}
\newtheorem{assumption}[theorem]{Assumption}
\newtheorem{conjecture}[theorem]{Conjecture}
\newtheorem{problem}[theorem]{Problem}

\theoremstyle{remark}
\newtheorem{remark}[theorem]{Remark}

\crefname{theorem}{Theorem}{Theorems}
\Crefname{theorem}{Theorem}{Theorems}
\crefname{proposition}{Proposition}{Propositions}
\Crefname{proposition}{Proposition}{Propositions}
\crefname{lemma}{Lemma}{Lemmas}
\Crefname{lemma}{Lemma}{Lemmas}
\crefname{corollary}{Corollary}{Corollaries}
\Crefname{corollary}{Corollary}{Corollaries}
\crefname{definition}{Definition}{Definitions}
\Crefname{definition}{Definition}{Definitions}
\crefname{assumption}{Assumption}{Assumptions}
\Crefname{assumption}{Assumption}{Assumptions}
\crefname{conjecture}{Conjecture}{Conjectures}
\Crefname{conjecture}{Conjecture}{Conjectures}
\crefname{problem}{Problem}{Problems}
\Crefname{problem}{Problem}{Problems}
\crefname{principle}{Principle}{Principles}
\Crefname{principle}{Principle}{Principles}

\DeclareMathOperator*{\esssup}{ess\,sup}
\DeclareMathOperator{\dist}{dist}

\newcommand{\R}{\mathbb R}

\newcommand{\eps}{\varepsilon}
\newcommand{\CKN}{\mathrm{CKN}}

\newcommand{\loc}{\mathrm{loc}}
\newcommand{\dx}{\,dx}

\newcommand{\dxdt}{\,dx\,dt}
\newcommand{\norm}[2]{\left\|#1\right\|_{#2}}
\newcommand{\abs}[1]{\left|#1\right|}
\newcommand{\ip}[2]{\left\langle #1,#2\right\rangle}
\newcommand{\Sup}{\mathrm{Sup}}
\newcommand{\Tax}{\mathrm{Tax}}
\newcommand{\Leak}{\mathrm{Leak}}
\newcommand{\Rep}{\mathrm{Rep}}
\newcommand{\Err}{\mathrm{Err}}

\newcommand{\calB}{\mathcal B}

\newcommand{\calE}{\mathcal E}
\newcommand{\calF}{\mathcal F}

\newcommand{\calL}{\mathcal L}
\newcommand{\calM}{\mathcal M}

\newcommand{\calP}{\mathcal P}

\newcommand{\calR}{\mathcal R}
\newcommand{\calS}{\mathcal S}
\newcommand{\calT}{\mathcal T}

\title{\textbf{A Structural Audit of Navier--Stokes Obstruction Calculus}}

\author{Runlong Yu\\
The University of Alabama, Tuscaloosa, AL, USA\\
\texttt{ryu5@ua.edu}}

\date{}

\begin{document}
\maketitle

\begin{abstract}
	We audit a finite-scale program for the local regularity problem of the three-dimensional incompressible Navier--Stokes equations. The program develops critical ledgers, coarse-grained defect decompositions, pressure--flux work identities, quotient cleanings, and bad-scale counting mechanisms. These results form an obstruction calculus: they locate how Caffarelli--Kohn--Nirenberg badness may be transported, hidden, or reproduced across scales, but they do not by themselves provide a coercive estimate excluding a surviving obstruction. We prove a resolution lemma separating full CKN badness into coarse badness and subfilter residual, and show that no unconditional single-scale domination by a signed combined-work detector is available. The audit therefore identifies the next necessary target: a filtered stretching--diffusion estimate, including subgrid forcing, leakage, pressure tails, and direction-incoherence defects, capable of converting the existing decomposition theory into a genuine regularity or obstruction-exclusion mechanism.
\end{abstract}

\noindent\textbf{Keywords.}
Navier--Stokes equations; suitable weak solutions; partial regularity; Caffarelli--Kohn--Nirenberg theory; vortex stretching; filtered vorticity; stretching--diffusion balance; defect cascade; coarse graining; pressure work; energy flux; compactness-rigidity.

\medskip
\noindent\textbf{2020 Mathematics Subject Classification.}
Primary 35Q30; Secondary 35B65, 35B45, 76D05.

\tableofcontents

\section{Introduction: why a structural audit is needed}

\subsection{The local regularity question before smallness}

Let
\[
Q_r(z_0)=B_r(x_0)\times(t_0-r^2,t_0),
\qquad z_0=(x_0,t_0),
\]
and consider the three-dimensional incompressible Navier--Stokes equations
\begin{equation}\label{eq:NS}
\partial_tu-\Delta u+(u\cdot\nabla)u+\nabla p=0,
\qquad \nabla\cdot u=0.
\end{equation}
The local suitable-weak-solution theory descending from Leray and Hopf and developed by Scheffer and Caffarelli--Kohn--Nirenberg turns smallness of scale-critical quantities into local regularity; see \cite{Leray1934,Hopf1951,Scheffer1976,CKN1982,Lin1998,ESS2003,Seregin2015}. A possible singular point must therefore avoid every sufficiently small CKN decay scale.

The program audited here began from a question that precedes the endpoint regularity criterion:
\begin{equation}\label{eq:guiding-question}
\boxed{
\text{If CKN badness persists, what pays for it, hides it, transfers it, or reproduces it?}
}
\end{equation}
This question led to finite-scale ledgers, coarse-grained defect packages, pressure--flux--energy--trace observations, quotient cleanings, reproduction residuals, and finite-chain audit inequalities \cite{YuIDC2026,YuLedger2026,YuAudit2026,YuComp2026,YuLocalClean2026,YuRecursive2026}. More recent modules close two specific interfaces: a conservative finite-chain CKN counting result through one-component compactness \cite{YuCKNCount2026}, and a fixed-chain combined pressure--flux work depletion theorem. A separate resolution lemma isolates the subfilter obstruction to any detector-to-CKN bridge.

The accumulation of these structures creates a new question. Why do many correct finite-window theorems not yet produce a regularity argument? The answer is not that all preceding definitions were empty. Rather, most of them are \emph{obstruction-organizing structures}. They identify where a proof must pay, but they do not themselves prove that the price is unaffordable.

\subsection{Central thesis}

The central conclusion of this article is
\begin{equation}\label{eq:central-thesis}
\boxed{
\text{The existing structure is a successful obstruction calculus, not yet a regularity mechanism.}
}
\end{equation}
The finite-scale ledger is a genuine PDE theorem. The coarse package records genuine Navier--Stokes identities. Fixed-window quotient gaps are genuine finite-dimensional compactness theorems. Local-to-clean and recursive results correctly expose pressure, localization, truncation, synchronization, and reproduction costs. Nevertheless, the surviving obstruction is not excluded because the program still lacks a scale-uniform coercive estimate or a compactness-rigidity theorem.

A useful distinction throughout the paper is
\begin{equation}\label{eq:bookkeeping-coercivity}
\text{bookkeeping or interface structure}
\quad\neq\quad
\text{coercive PDE estimate}.
\end{equation}
A bookkeeping theorem says that every loss appears in a named ledger. A coercive estimate says that the total loss is small enough to absorb, or that a nonzero obstruction cannot satisfy the equation. The first task has been developed substantially. The second remains open in the general three-dimensional setting.

\subsection{Why the next estimate should target vortex stretching}

The audit changes the natural next question. A one-component route asks whether
\[
u_3\text{-degeneration or }u_3\text{-smallness}
\quad\Longrightarrow\quad
\text{regularity}.
\]
This is a meaningful regularity criterion and is closely related to the one-component regularity literature \cite{KukavicaZiane2006,CheminZhang2016,HanLeiLiZhao2019,BarkerPrange2021}. It is not, however, the intrinsic three-dimensional blow-up mechanism. The genuinely dangerous term in the vorticity equation is vortex stretching. For smooth solutions,
\[
\partial_t\omega-\Delta\omega+(u\cdot\nabla)\omega
=(\omega\cdot\nabla)u,
\qquad
(\omega\cdot\nabla)u\cdot\omega=S\omega\cdot\omega,
\]
where $S=(\nabla u+\nabla u^T)/2$. A singular branch, if it exists, must repeatedly rebuild scale-critical badness through a mechanism that keeps positive stretching from being absorbed by diffusion, pressure compatibility, or geometric decorrelation.

Recent Euler blow-up constructions sharpen this lesson. Elgindi proved finite-time singularity formation for $C^{1,\alpha}$ solutions of the incompressible Euler equations on $\R^3$ \cite{Elgindi2021}. Chen and Hou developed a stable nearly self-similar blow-up route for the two-dimensional Boussinesq and three-dimensional axisymmetric Euler setting \cite{ChenHou2023Analysis,ChenHou2023Numerics}. C\'ordoba, Martinez-Zoroa, and Zheng constructed a different non-self-similar mechanism built from infinitely many vorticity regions separated by vortex-free gaps \cite{CMZZ2025}. These mechanisms suggest that the Navier--Stokes problem should be interrogated at the level of recurring stretching cascades, not only through low-dimensional velocity components.

The corrected target is therefore
\begin{equation}\label{eq:corrected-target}
\boxed{
\begin{gathered}
\text{If a vortex-stretching cascade occurs,}\\
\text{how must diffusion, pressure, subgrid transfer,}\\
\text{or vorticity-direction incoherence obstruct it?}
\end{gathered}}
\end{equation}
This is the mechanism-level replacement for the earlier expectation that a power-type one-component approximation estimate would be the central missing lemma.

\subsection{Contributions of this audit}

The paper has six purposes.
\begin{enumerate}
\item We unify the already established finite-scale and finite-window conclusions in a common notation and distinguish theorem-level PDE content from algebraic or conditional interfaces.
\item We identify the exact point at which the current detector route loses contact with full CKN badness.
\item We prove a gauge-invariant resolution lemma and a direct single-scale obstruction to unconditional detector domination.
\item We explain why the main route should shift from one-component power approximation and abstract detector badness to a filtered vortex-stretching--diffusion mechanism.
\item We formulate a small list of new PDE estimates that would genuinely close the existing architecture.
\item We prove an abstract conditional closure theorem showing how defect extraction, silent-channel control, leakage absorption, and backscatter control would imply the occurrence of a CKN-small scale.
\end{enumerate}

\subsection{Theorem-status map}

The following table summarizes the logical status of the principal modules.

\begin{center}
\small
\begin{tabularx}{\textwidth}{@{}p{0.24\textwidth}p{0.20\textwidth}X@{}}
\toprule
Module & Status & Mathematical content and remaining boundary\\
\midrule
Finite-scale critical ledger & Proved PDE theorem & Persistent reservoir badness forces cumulative untaxed supply or leakage on a finite chain. It does not prove uniform taxation.\\
NS-generated coarse package & Proved PDE identities & Momentum, pressure compatibility, covariance positivity, and resolved energy-flux identity hold at fixed coarse length.\\
Clean finite-window quotient gap & Proved under kernel-free hypothesis & Qualitative kernel-freeness is equivalent to a positive finite-dimensional gap; no scale-uniform lower bound follows.\\
Local-to-clean transfer & Conditional algebraic theorem & A clean gap transfers after quotient lifting, detector comparison, and normalized residual absorption. These PDE-facing comparisons remain class-dependent.\\
Recursive audit chains & Proved finite-chain propagation under one-step admissibility & Errors propagate by a variable-coefficient recursion. Infinite-chain summability and uniform constants are not proved.\\
CKN-bad scale counting & Proved with one-component closure & A non-tautological counting theorem follows from classical compactness and the regularity of the $u_3=0$ limit class. It does not close a pure pressure--flux detector.\\
Combined pressure--flux work depletion & Proved fixed-chain PDE theorem & Forward combined work and dissipation are paid by initial energy, leakage, and backscatter. It does not control the sign branch, leakage sum, or moving-window constants.\\
Direct detector-to-CKN domination & False without extra assumptions & A positive CKN norm cannot be universally dominated by a signed work distribution.\\
Filtered stretching--diffusion route & New PDE target with a fixed-$\ell$ identity & The localized filtered enstrophy identity exposes positive stretching against vorticity diffusion and subgrid vorticity forcing. At suitable-weak level it must be formulated with coarse vorticity, not with unfiltered $\nabla\omega$.\\
Scale-uniform compactness-rigidity & Open & The program has not extracted and excluded a canonical minimal obstruction across arbitrarily small scales.\\
\bottomrule
\end{tabularx}
\end{center}

\subsection{Organization}

\Cref{sec:preliminaries} fixes the suitable-weak and CKN setting. \Cref{sec:ledger} records the finite-scale PDE spine. \Cref{sec:packages} describes the NS-generated coarse package. \Cref{sec:finite-window} audits the finite-window quotient geometry and local-to-clean transfer. \Cref{sec:recursive} discusses recursive audit chains and gluing. \Cref{sec:bottleneck} proves the resolution lemma, gives the direct-domination obstruction, and explains the correct silent-mechanism dichotomy. \Cref{sec:wall} explains why the existing structures reach a wall. \Cref{sec:new-estimates} formulates the new PDE estimates. \Cref{sec:stretching-route} gives the corrected filtered vortex-stretching route and the associated bad-scale counting target. \Cref{sec:mechanism} proposes the shift from detector geometry to mechanism geometry. \Cref{sec:conditional-closure} proves a conditional closure theorem. \Cref{sec:conclusion-outlook} closes the paper with a concise conclusion and outlook.

\section{Suitable weak solutions and scale-critical badness}\label{sec:preliminaries}

\subsection{Suitable weak solutions}

\begin{definition}[Suitable weak solution]\label{def:suitable}
Let $Q\subset\R^3\times\R$ be a parabolic cylinder. A pair $(u,p)$ is a suitable weak solution of \eqref{eq:NS} in $Q$ if
\[
u\in L^\infty_tL^2_x(Q)\cap L^2_tH^1_x(Q),
\qquad p\in L^{3/2}(Q),
\]
the equations hold distributionally, $\nabla\cdot u=0$, and for every nonnegative $\phi\in C_c^\infty(Q)$ the local energy inequality holds in the standard almost-everywhere time sense:
\begin{align}\label{eq:LEI}
\int |u(t)|^2\phi(t)\dx
+2\int_{-\infty}^t\!\int |\nabla u|^2\phi\dx\,ds
&\le
\int_{-\infty}^t\!\int |u|^2(\partial_s\phi+\Delta\phi)\dx\,ds\\
&\quad+
\int_{-\infty}^t\!\int (|u|^2+2p)u\cdot\nabla\phi\dx\,ds.
\end{align}
\end{definition}

The pressure is determined only up to an additive function of time. In local pressure decompositions one writes a source-generated Calderon--Zygmund part plus a spatially harmonic remainder; this local-pressure viewpoint is consistent with the pressure-projection literature \cite{Wolf2017}. The harmonic remainder is not a genuine gauge for the momentum equation; its gradient and pressure work remain physical. This distinction is essential in the coarse-package and work-ledger formulations below.

\subsection{Scale-critical quantities}

For $Q_r(z_0)$ define
\begin{align}
A(z_0,r)&=r^{-1}\esssup_{t_0-r^2<t<t_0}\int_{B_r(x_0)}|u(x,t)|^2\dx,\\
E(z_0,r)&=r^{-1}\int_{Q_r(z_0)}|\nabla u|^2\dxdt,\\
C(z_0,r)&=r^{-2}\int_{Q_r(z_0)}|u|^3\dxdt,\\
D(z_0,r)&=r^{-2}\int_{Q_r(z_0)}
\abs{p-(p)_{B_r(x_0)}(t)}^{3/2}\dxdt.
\end{align}
Set
\begin{equation}\label{eq:PsiPhi}
\Phi(z_0,r)=A(z_0,r)+E(z_0,r)+C(z_0,r)+D(z_0,r),
\qquad
\Psi(z_0,r)=C(z_0,r)+D(z_0,r).
\end{equation}
The CKN epsilon-regularity criterion supplies a universal $\eps_{\CKN}>0$ such that sufficiently small $\Psi(z_0,r)$ implies regularity in a smaller cylinder \cite{CKN1982,Lin1998,Seregin2015}.

\begin{definition}[Non-CKN branch]
A point $z_0$ has a non-CKN branch along radii $r_k\downarrow0$ if
\[
\Psi(z_0,r_k)\ge \eps_{\CKN}
\]
for all sufficiently large $k$.
\end{definition}

\subsection{Scaling}

For a radius $r>0$ define
\begin{equation}\label{eq:NS-scaling}
u^{(r)}(y,s)=r u(x_0+ry,t_0+r^2s),
\qquad
p^{(r)}(y,s)=r^2p(x_0+ry,t_0+r^2s).
\end{equation}
Then $(u^{(r)},p^{(r)})$ is suitable on the rescaled cylinder and the quantities $A,E,C,D$ are invariant. This converts the small-scale problem into a sequence of unit-scale states and motivates the language of a renormalized orbit in a critical state space.

\section{The established PDE spine: finite-scale critical ledgers}\label{sec:ledger}

\subsection{Reservoir and transition coordinates}

Fix $0<\theta<1$ and an admissible nested chain
\[
r_k=\theta^kr_0,
\qquad
Q_k=Q_{r_k}(z_k),
\qquad
Q_{k+1}\subset Q_k,
\]
with controlled parabolic drift of the centers. Let $\phi_k\in C_c^\infty(Q_k)$ be nonnegative, equal to one on $Q_{k+1}$, and satisfy the scale-adapted derivative bounds
\[
|\nabla\phi_k|\le C_\theta r_k^{-1},
\qquad
|\partial_t\phi_k|+|\Delta\phi_k|\le C_\theta r_k^{-2}.
\]
Write $A_k=A(z_k,r_k)$ and similarly for $E_k,C_k,D_k$. The reservoir badness is
\begin{equation}\label{eq:reservoir}
B_k=A_k+C_k+D_k.
\end{equation}
The dissipation $E_k$ is kept mainly as a tax rather than as a reservoir coordinate.

Define the transition quantities
\begin{align}
\calF_k&=r_k^{-1}\int_{Q_k}|u|^2|u\cdot\nabla\phi_k|\dxdt,\label{eq:flux-supply}\\
\calP_k&=r_k^{-1}\int_{Q_k}
\abs{p-(p)_{B_{r_k}(x_k)}(t)}|u\cdot\nabla\phi_k|\dxdt,\label{eq:pressure-supply}\\
\calL_k&=r_k^{-1}\int_{Q_k}|u|^2
\bigl(|\partial_t\phi_k|+|\Delta\phi_k|\bigr)\dxdt.\label{eq:cutoff-leak}
\end{align}
The natural state is therefore a point-edge object
\begin{equation}\label{eq:point-edge}
X_k=(A_k,C_k,D_k;E_{k+1};\calF_k,\calP_k,\calL_k).
\end{equation}
This separation is structural: reservoir coordinates live at a scale, whereas flux, pressure transport, and localization live on a transition.

\subsection{One-step ledger}

The local energy inequality gives
\begin{equation}\label{eq:energy-transition}
A_{k+1}+2E_{k+1}
\le \theta^{-1}(\calL_k+\calF_k+2\calP_k).
\end{equation}
The standard interpolation and pressure-decay estimates give
\begin{align}
C_{k+1}&\le C_{I,\theta}
\bigl((\calF_k+2\calP_k)^{3/2}+\calL_k^{3/2}\bigr),\label{eq:interpolation-ledger}\\
D_{k+1}&\le C_P\theta D_k+C_P\theta^{-2}C_k.\label{eq:pressure-decay}
\end{align}
Fix $0<\alpha<1$ and choose $\theta$ so that
\[
\delta_D=(1-\alpha)-C_P\theta>0.
\]
Set
\begin{align}
\Sup_k^{\rm full}
&=\theta^{-1}(\calF_k+2\calP_k)
+C_{I,\theta}(\calF_k+2\calP_k)^{3/2}
+C_P\theta^{-2}C_k,\\
\Tax_k^{\rm full}
&=2E_{k+1}+(1-\alpha)A_k+(1-\alpha)C_k+\delta_DD_k,\\
\Leak_k^{\rm full}
&=\theta^{-1}\calL_k+C_{I,\theta}\calL_k^{3/2}.
\end{align}

\begin{theorem}[Full critical ledger inequality]\label{thm:full-ledger}
Along every admissible finite chain,
\begin{equation}\label{eq:full-ledger}
B_{k+1}-(1-\alpha)B_k
\le
\Sup_k^{\rm full}-\Tax_k^{\rm full}+\Leak_k^{\rm full}.
\end{equation}
\end{theorem}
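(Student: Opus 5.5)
The plan is to prove \eqref{eq:full-ledger} by adding the three one-step estimates \eqref{eq:energy-transition}, \eqref{eq:interpolation-ledger} and \eqref{eq:pressure-decay}, which are taken as given, and then rearranging. No new PDE input is needed. The only point needing care is the bookkeeping: the terms proportional to $A_k$, $C_k$, $D_k$ and $E_{k+1}$ must land on the tax side with exactly the coefficients in $\Tax_k^{\rm full}$.

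Fix $k$ on an admissible chain. Adding the three inequalities gives
\begin{align*}
A_{k+1}+C_{k+1}+D_{k+1}+2E_{k+1}
&\le \theta^{-1}(\calF_k+2\calP_k)+C_{I,\theta}(\calF_k+2\calP_k)^{3/2}+C_P\theta^{-2}C_k\\
&\quad+C_P\theta D_k+\theta^{-1}\calL_k+C_{I,\theta}\calL_k^{3/2}.
\end{align*}
The left side is $B_{k+1}+2E_{k+1}$ by \eqref{eq:reservoir}. On the right, the first line is exactly $\Sup_k^{\rm full}$, and the two $\calL_k$ terms form $\Leak_k^{\rm full}$. The remaining term is $C_P\theta D_k$.

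Next, move $2E_{k+1}$ to the right and subtract $(1-\alpha)B_k=(1-\alpha)(A_k+C_k+D_k)$ from both sides. The right side then contains
\[
-2E_{k+1}-(1-\alpha)A_k-(1-\alpha)C_k-\bigl[(1-\alpha)-C_P\theta\bigr]D_k .
\]
By the choice of $\theta$, the bracket equals $\delta_D$, so this expression is precisely $-\Tax_k^{\rm full}$. This yields \eqref{eq:full-ledger}.

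The hypothesis $\delta_D>0$ is not needed for the algebra. It only ensures that $D_k$ genuinely appears as a tax. Nothing is being absorbed, so no sign condition on $\Sup$ or $\Leak$ is used.

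The main obstacle is not this algebra but making sure the inputs are applied consistently along the chain. All the inputs must use the same cutoff $\phi_k$ and the same normalizations $r_k^{-1}$, $r_k^{-2}$. The ratio $r_k/r_{k+1}=\theta^{-1}$ from passing between $Q_k$ and $Q_{k+1}$ must already be built into the factor $\theta^{-1}$ in \eqref{eq:energy-transition} and into $C_{I,\theta}$, and the bounded drift of the centers $z_k$ must be absorbed into the constants.

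I would also check the following. The local energy inequality \eqref{eq:LEI} holds only for a.e.\ $t$, but $A_{k+1}$ is an essential supremum, so \eqref{eq:energy-transition} remains valid. The pressure mean subtracted in $\calP_k$ is harmless, because $\int (p)_{B_{r_k}}(t)\,u\cdot\nabla\phi_k\dx=0$ by incompressibility. I would state the theorem with whatever constants $C_{I,\theta}$, $C_P$ these inputs supply.
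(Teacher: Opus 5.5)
Your proposal is correct and is the paper's proof: add \eqref{eq:energy-transition}, \eqref{eq:interpolation-ledger}, and \eqref{eq:pressure-decay}, subtract $(1-\alpha)B_k$, and note that the net coefficient of $D_k$ is $-\delta_D$. The remaining terms then group exactly into $\Sup_k^{\rm full}$, $-\Tax_k^{\rm full}$, and $\Leak_k^{\rm full}$, and your remark that $\delta_D>0$ plays no role in this algebra is also accurate.
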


\begin{proof}
Add \eqref{eq:energy-transition}, \eqref{eq:interpolation-ledger}, and \eqref{eq:pressure-decay}, then subtract $(1-\alpha)(A_k+C_k+D_k)$. The coefficient of $D_k$ is $-\delta_D$, and the remaining terms are precisely those grouped into supply, tax, and leakage.
\end{proof}

\subsection{Finite-scale survival alternative}

\begin{theorem}[Finite-scale survival alternative]\label{thm:survival}
Assume
\[
B_k\ge\eps,
\qquad 0\le k\le N-1.
\]
Then
\begin{equation}\label{eq:survival}
\sum_{k=0}^{N-1}
\bigl(\Sup_k^{\rm full}-\Tax_k^{\rm full}\bigr)_+
\ge
\alpha\eps N-B_0-
\sum_{k=0}^{N-1}\Leak_k^{\rm full}.
\end{equation}
\end{theorem}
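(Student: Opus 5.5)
The plan is to sum the one-step inequality of \cref{thm:full-ledger} over the chain and telescope. I will use the chain up to index $N$, so that the ledger \eqref{eq:full-ledger} is available for every $0\le k\le N-1$. First I rewrite its left-hand side as
\[
B_{k+1}-(1-\alpha)B_k=(B_{k+1}-B_k)+\alpha B_k .
\]
Summing over $0\le k\le N-1$, the differences telescope and give
\[
B_N-B_0+\alpha\sum_{k=0}^{N-1}B_k
\le
\sum_{k=0}^{N-1}\bigl(\Sup_k^{\rm full}-\Tax_k^{\rm full}\bigr)
+\sum_{k=0}^{N-1}\Leak_k^{\rm full}.
\]

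Next I bound both sides crudely. On the right, each summand $\Sup_k^{\rm full}-\Tax_k^{\rm full}$ is at most its positive part. On the left, I use two facts:
\begin{itemize}
\item $B_N=A_N+C_N+D_N\ge0$, since $A,C,D$ are integrals of nonnegative quantities, so $B_N$ may be dropped;
\item the hypothesis $B_k\ge\eps$ for $0\le k\le N-1$ gives $\alpha\sum_{k=0}^{N-1}B_k\ge\alpha\eps N$.
\end{itemize}
This yields
\[
\alpha\eps N-B_0\le\sum_{k=0}^{N-1}\bigl(\Sup_k^{\rm full}-\Tax_k^{\rm full}\bigr)_+ +\sum_{k=0}^{N-1}\Leak_k^{\rm full}.
\]
Rearranging gives \eqref{eq:survival}.

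The only points needing care are bookkeeping ones. Every quantity must be finite, so that the subtraction and rearrangement are legitimate. This holds because $u\in L^\infty_tL^2_x\cap L^2_tH^1_x$ and $p\in L^{3/2}$ make $A_k,E_k,C_k,D_k$ finite. The transition quantities $\calF_k,\calP_k,\calL_k$ are then finite by H\"older's inequality and the cutoff bounds. The other point is that the chain must reach index $N$, so that the ledger applies at $k=N-1$ and $B_N$ is defined.

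I expect no genuine obstacle. The PDE content sits entirely in \cref{thm:full-ledger}, and this statement is its averaged, telescoped form. In particular, the lower bound holds with the $B_N$ term simply discarded, so no control of the terminal badness is needed.
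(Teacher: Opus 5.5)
Your proposal is correct and matches the paper's proof. The paper also sums the one-step ledger of \cref{thm:full-ledger}, telescopes $\sum_{k=0}^{N-1}\bigl(B_{k+1}-(1-\alpha)B_k\bigr)=B_N-B_0+\alpha\sum_{k=0}^{N-1}B_k$, drops $B_N\ge0$, uses $B_k\ge\eps$, and replaces each supply--tax difference by its positive part before rearranging; your remarks on finiteness and on the chain reaching index $N$ just make explicit what the paper leaves implicit.
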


\begin{proof}
Set $M_k=B_{k+1}-(1-\alpha)B_k$. Summing gives
\[
\sum_{k=0}^{N-1}M_k
=B_N-B_0+\alpha\sum_{k=0}^{N-1}B_k
\ge -B_0+\alpha\eps N.
\]
On the other hand, \cref{thm:full-ledger} gives
\[
\sum_{k=0}^{N-1}M_k
\le
\sum_{k=0}^{N-1}
(\Sup_k^{\rm full}-\Tax_k^{\rm full})
+
\sum_{k=0}^{N-1}\Leak_k^{\rm full}.
\]
Replace each signed supply--tax difference by its positive part and rearrange.
\end{proof}

The theorem proves the first robust principle of the program:
\begin{equation}\label{eq:not-free}
\boxed{\text{Persistent scale-critical badness is not free.}}
\end{equation}
It does not prove regularity, because it does not show that the untaxed supply is impossible or that leakage has a summable average. The ledger is an accounting theorem, not yet a Lyapunov functional.

\section{NS-generated defect packages}\label{sec:packages}

\subsection{Coarse graining and Reynolds covariance}

Coarse-grained energy flux and local energy-defect formalisms are standard tools in turbulence and weak-solution analysis; the notation below follows this tradition while being restricted to a fixed finite scale \cite{ConstantinETiti1994,Eyink1994,DuchonRobert2000,LeslieShvydkoy2018}.

Let $S_\ell f=\rho_\ell*_xf$ be spatial convolution with a nonnegative mollifier at length $\ell>0$. On an interior cylinder define
\begin{equation}\label{eq:coarse-package}
U^\ell=S_\ell u,
\qquad
P^\ell=S_\ell p,
\qquad
R^\ell=S_\ell(u\otimes u)-U^\ell\otimes U^\ell.
\end{equation}
Because $S_\ell$ is an averaging operator,
\begin{equation}\label{eq:R-positive}
R^\ell=(R^\ell)^T\ge0
\end{equation}
as a quadratic form. The resolved flux is
\begin{equation}\label{eq:Pi-def}
\Pi^\ell=-R^\ell:\nabla U^\ell.
\end{equation}

\begin{proposition}[NS-generated coarse package]\label{prop:coarse-package}
The fields in \eqref{eq:coarse-package} satisfy, on every interior cylinder,
\begin{align}
\partial_tU^\ell-\Delta U^\ell
+\nabla\cdot(U^\ell\otimes U^\ell)+\nabla P^\ell
&=-\nabla\cdot R^\ell,\label{eq:coarse-momentum}\\
\nabla\cdot U^\ell&=0,\label{eq:coarse-div}\\
-\Delta P^\ell&=\partial_i\partial_j
(U_i^\ell U_j^\ell+R_{ij}^\ell),\label{eq:coarse-pressure}\\
\partial_t\frac{|U^\ell|^2}{2}
-\Delta\frac{|U^\ell|^2}{2}
+|\nabla U^\ell|^2
+\nabla\cdot\left[
\left(\frac{|U^\ell|^2}{2}+P^\ell\right)U^\ell+R^\ell U^\ell
\right]
&=-\Pi^\ell.\label{eq:coarse-energy}
\end{align}
\end{proposition}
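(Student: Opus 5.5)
The plan is to apply the spatial mollifier $S_\ell$ to the distributional equations \eqref{eq:NS} and then do the energy computation at the coarse level. Because $S_\ell$ is convolution in $x$ only, it commutes with $\partial_t$, $\nabla$, $\Delta$ and $\nabla\cdot$ in the sense of distributions on any cylinder whose spatial $\ell$-neighbourhood lies inside $Q$. This is what ``interior cylinder'' means here.

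\emph{Coarse momentum, divergence and pressure.} Write the nonlinearity in divergence form, $(u\cdot\nabla)u=\nabla\cdot(u\otimes u)$, using $\nabla\cdot u=0$, and apply $S_\ell$ to \eqref{eq:NS}. This gives
\[
\partial_tU^\ell-\Delta U^\ell+\nabla\cdot S_\ell(u\otimes u)+\nabla P^\ell=0 .
\]
Inserting $S_\ell(u\otimes u)=U^\ell\otimes U^\ell+R^\ell$ yields \eqref{eq:coarse-momentum}, and \eqref{eq:coarse-div} is immediate. For \eqref{eq:coarse-pressure}, take the divergence of \eqref{eq:coarse-momentum}. The time-derivative and Laplacian terms vanish by \eqref{eq:coarse-div}. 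Alternatively, filter the distributional identity $-\Delta p=\partial_i\partial_j(u_iu_j)$, which follows from \eqref{eq:NS}. Note that $u\otimes u\in L^{5/3}$ and $p\in L^{3/2}$, so every filtered field is smooth in $x$ with the time integrability it inherits. This will be used in the energy step.

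\emph{Coarse energy identity.} Dot \eqref{eq:coarse-momentum} with $U^\ell$. This is the main obstacle. Because $U^\ell$ is smooth in space but only distributional in time, the product $\partial_tU^\ell\cdot U^\ell$ must be justified as $\partial_t|U^\ell|^2/2$. I would handle this in two stages.
\begin{itemize}
\item First note that $\partial_tU^\ell$ lies in $L^1_t$ with values in spatially smooth functions, by reading it off from \eqref{eq:coarse-momentum}, in which all other terms are spatially smooth and time-integrable.
\item Then mollify in time as well and pass to the limit, in the standard Lions--Magenes manner. Since $U^\ell\in L^\infty_tC^k_x$ locally, this is routine.
\end{itemize}
The individual terms then become the following.
\begin{itemize}
\item The diffusion term: $-\Delta U^\ell\cdot U^\ell=-\Delta\frac{|U^\ell|^2}{2}+|\nabla U^\ell|^2$.
\item The advection term, by \eqref{eq:coarse-div}: $\nabla\cdot(U^\ell\otimes U^\ell)\cdot U^\ell=\nabla\cdot\bigl(\frac{|U^\ell|^2}{2}U^\ell\bigr)$.
\item The pressure term: $\nabla P^\ell\cdot U^\ell=\nabla\cdot(P^\ell U^\ell)$.
\item The subfilter term, using the symmetry of $R^\ell$ from \eqref{eq:R-positive}: $(\nabla\cdot R^\ell)\cdot U^\ell=\nabla\cdot(R^\ell U^\ell)-R^\ell:\nabla U^\ell$.
\end{itemize}
Moving $\nabla\cdot(R^\ell U^\ell)$ to the left-hand side leaves $+R^\ell:\nabla U^\ell=-\Pi^\ell$ on the right, by \eqref{eq:Pi-def}. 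This gives \eqref{eq:coarse-energy} as an identity, with no inequality and no defect measure, since all fields are spatially smooth.

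Finally, I would note that the pressure normalization, up to an additive function of time, does not affect any of the identities. The only place it enters is $\nabla\cdot(P^\ell U^\ell)$, and there a function of time contributes nothing by \eqref{eq:coarse-div}.
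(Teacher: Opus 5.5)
Your proposal is correct and follows the same route as the paper. You convolve \eqref{eq:NS} in space, insert $S_\ell(u\otimes u)=U^\ell\otimes U^\ell+R^\ell$, take the divergence for the pressure equation, and dot with $U^\ell$ for the energy identity, using only the symmetry of $R^\ell$. Your extra justification of $\partial_tU^\ell\cdot U^\ell=\partial_t\frac{|U^\ell|^2}{2}$, via $\partial_tU^\ell\in L^1_tC^k_x$ on cylinders whose spatial $\ell$-neighbourhood lies in $Q$, is sound and makes explicit what the paper's ``integrating by parts algebraically'' leaves implicit.
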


\begin{proof}
Spatially convolve \eqref{eq:NS}. The difference between $S_\ell(u\otimes u)$ and $U^\ell\otimes U^\ell$ is $R^\ell$, which gives \eqref{eq:coarse-momentum}. Taking divergence gives \eqref{eq:coarse-pressure}. Dotting \eqref{eq:coarse-momentum} with $U^\ell$ and integrating by parts algebraically gives \eqref{eq:coarse-energy}. Positivity follows from Jensen's inequality:
\[
\xi\cdot R^\ell\xi
=S_\ell((\xi\cdot u)^2)-(S_\ell(\xi\cdot u))^2\ge0.
\]
\end{proof}

\subsection{Active and harmonic pressure}

In a local pressure window choose a cutoff $\eta$ equal to one on the observation core and set schematically
\begin{equation}\label{eq:active-pressure}
P^{\rm act}=R_iR_j\bigl(\eta(U_i^\ell U_j^\ell+R_{ij}^\ell)\bigr),
\qquad
P^{\rm har}=P^\ell-P^{\rm act}.
\end{equation}
Then $P^{\rm har}$ is spatially harmonic on the core. Only additions $a(t)$ are genuine pressure gauges for the momentum equation. A spatially harmonic pressure is retained unless a particular quotient theorem places a controlled finite-dimensional part of it into a cleaning space and charges the tail explicitly.

\subsection{What the package accomplishes}

The package
\begin{equation}\label{eq:D-package}
D_{k,\ell}=(U^\ell,P^\ell;P^{\rm act},P^{\rm har},R^\ell,\Pi^\ell)
\end{equation}
is stronger than an arbitrary algebraic defect. Its coordinates satisfy the same equation, pressure compatibility, covariance positivity, and energy-transfer identity. This is the content of the \emph{NS-realizability filter}: a formal kernel vector is relevant only if it lies in the closure of packages generated by suitable weak solutions.

At the same time, the package is not a normal form for a singularity. It is a collection of compatible coordinates at a chosen scale and coarse length. Nothing in \cref{prop:coarse-package} implies that the package is nontrivial whenever CKN badness is nontrivial, nor that it reproduces uniformly across scales.

\section{Finite-window obstruction geometry}\label{sec:finite-window}

\subsection{Cleaning, quotient, and observation}

Fix a finite scale or frequency window $\Lambda$. Let $D_\Lambda$ be a finite-dimensional constrained defect space, $C_\Lambda$ a cleaning space, and
\[
G_\Lambda:C_\Lambda\to D_\Lambda
\]
a linear cleaning map. The quotient distance is
\begin{equation}\label{eq:quot-distance}
\dist_\Lambda(d,\operatorname{Im}G_\Lambda)
=
\inf_{c\in C_\Lambda}\norm{d-G_\Lambda c}{D_\Lambda}.
\end{equation}
Let
\[
O_\Lambda:D_\Lambda\to Y_\Lambda
\]
collect active pressure, flux, energy, and trace observations. In a dynamic version, one also has a reproduction residual $\Rep_\Lambda(d)$ and a nonnegative tax or profit-cost functional $\Tax_\Lambda(d)$.

\subsection{Exact finite-window quotient theorem}

\begin{theorem}[Finite-window anti-phantom gap]\label{thm:finite-gap}
Assume the detector
\begin{equation}\label{eq:detector-size}
\calM_\Lambda(d)
=\norm{O_\Lambda d}{Y_\Lambda}
+C_R\Rep_\Lambda(d)+C_T\Tax_\Lambda(d)
\end{equation}
is continuous, positively homogeneous, and constant on gauge cosets. Assume further that
\begin{equation}\label{eq:kernel-free}
O_\Lambda d=0,
\quad
\Rep_\Lambda(d)=0,
\quad
\Tax_\Lambda(d)=0
\quad\Longrightarrow\quad
d\in\operatorname{Im}G_\Lambda.
\end{equation}
Then there exists $\mu_\Lambda>0$ such that
\begin{equation}\label{eq:finite-gap}
\calM_\Lambda(d)
\ge
\mu_\Lambda\dist_\Lambda(d,\operatorname{Im}G_\Lambda)
\qquad
\text{for all }d\in D_\Lambda.
\end{equation}
\end{theorem}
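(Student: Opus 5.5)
The plan is the standard compactness argument on a finite-dimensional quotient. Assume first that $\operatorname{Im}G_\Lambda\neq D_\Lambda$; otherwise the right side of \eqref{eq:finite-gap} vanishes and any $\mu_\Lambda>0$ works.

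\textbf{Step 1: pass to the quotient.} Set $V=\operatorname{Im}G_\Lambda$. This is a linear subspace of the finite-dimensional space $D_\Lambda$, so it is closed. Let $\pi:D_\Lambda\to D_\Lambda/V$ be the quotient map. The quotient distance \eqref{eq:quot-distance} is the quotient norm $\|\pi d\|$, and it is a genuine norm on the finite-dimensional space $D_\Lambda/V$. I read ``constant on gauge cosets'' as $\calM_\Lambda(d+v)=\calM_\Lambda(d)$ for all $v\in V$. Then $\calM_\Lambda$ descends to a function $\widetilde\calM$ on $D_\Lambda/V$.

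\textbf{Step 2: regularity and positivity of the descended function.} $\widetilde\calM$ is continuous, because $\pi$ is an open surjection and $\calM_\Lambda$ is continuous. It is also positively homogeneous. Since each term of \eqref{eq:detector-size} is nonnegative (with $C_R,C_T>0$ and $\Rep_\Lambda,\Tax_\Lambda\ge0$ as in the setup), $\calM_\Lambda(d)=0$ forces $O_\Lambda d=0$, $\Rep_\Lambda(d)=0$ and $\Tax_\Lambda(d)=0$. The kernel-free hypothesis \eqref{eq:kernel-free} then gives $d\in V$. Hence $\widetilde\calM(q)>0$ for every $q\neq0$.

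\textbf{Step 3: minimize on the unit sphere.} The unit sphere $\{q:\|q\|=1\}$ in $D_\Lambda/V$ is compact because the space is finite-dimensional. A continuous, strictly positive function attains a positive minimum $\mu_\Lambda$ there. For a general $d\notin V$, write $q=\pi d$ and use homogeneity:
\[
\calM_\Lambda(d)=\widetilde\calM(q)=\|q\|\,\widetilde\calM\bigl(q/\|q\|\bigr)\ge\mu_\Lambda\dist_\Lambda(d,V).
\]
For $d\in V$ the inequality is trivial, since $\calM_\Lambda(d)\ge0$.

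\textbf{Main obstacle.} The only delicate point is Step 2: the descended function must be zero only at the origin of the quotient. This needs two things. First, the gauge invariance must be invariance under all of $\operatorname{Im}G_\Lambda$, not merely under a smaller gauge group. Second, the three detector components must be nonnegative, so that a zero total forces each component to vanish. If the gauge cosets are smaller than $\operatorname{Im}G_\Lambda$, I would instead argue by contradiction. Take $d_n$ with $\dist_\Lambda(d_n,V)=1$ and $\calM_\Lambda(d_n)\to0$. Replace each $d_n$ by a near-minimizer of the distance, $d_n-G_\Lambda c_n$, which stays bounded. Extract a convergent subsequence with limit $d$; then $\dist_\Lambda(d,V)=1$ while $\calM_\Lambda(d)=0$, contradicting \eqref{eq:kernel-free}. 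This fallback still needs $\calM_\Lambda$ to be invariant under the shifts $G_\Lambda c_n$, so the invariance assumption is essential either way.
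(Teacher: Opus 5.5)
Your proof is correct and follows the paper's argument. You descend $\calM_\Lambda$ to the finite-dimensional quotient $D_\Lambda/\operatorname{Im}G_\Lambda$, use \eqref{eq:kernel-free} to get strict positivity on the compact quotient unit sphere, take the minimum, and scale by homogeneity. You are also more explicit than the paper about the hypotheses its short proof leaves tacit: invariance under all of $\operatorname{Im}G_\Lambda$, nonnegativity of $\Rep_\Lambda,\Tax_\Lambda$ with $C_R,C_T>0$, and the degenerate case $\operatorname{Im}G_\Lambda=D_\Lambda$. Your contradiction-based fallback adds nothing new, since, as you note yourself, it needs the same invariance.
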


\begin{proof}
The detector descends to a continuous positively homogeneous function on the finite-dimensional quotient $D_\Lambda/\operatorname{Im}G_\Lambda$. By \eqref{eq:kernel-free}, it is strictly positive on the quotient unit sphere. Compactness of that sphere gives a positive minimum $\mu_\Lambda$. This is a finite-dimensional compactness step; it should not be confused with scale-uniform PDE compactness such as the Aubin--Lions--Simon mechanism \cite{Simon1986}. Homogeneity gives \eqref{eq:finite-gap}.
\end{proof}

This theorem is rigorous but limited. The substantive hypothesis is \eqref{eq:kernel-free}; finite-dimensional compactness merely turns qualitative kernel-freeness into a quantitative gap. The constant may collapse as the window grows.

\subsection{Local-to-clean transfer}

Let $D_\Lambda^{\loc}$ and $D_\Lambda^{\rm cl}$ be localized and clean defect spaces, with cleaning images $\Gamma_\Lambda^{\loc}$ and $\Gamma_\Lambda^{\rm cl}$. Let
\[
\Theta_\Lambda:D_\Lambda^{\loc}\to D_\Lambda^{\rm cl}
\]
be a local-to-clean chart. Suppose
\begin{align}
\dist_{\rm cl}(\Theta_\Lambda D,\Gamma_\Lambda^{\rm cl})
&\ge (1-\eps_G)\dist_{\loc}(D,\Gamma_\Lambda^{\loc})-\delta_G,\label{eq:quot-lift}\\
\calM_\Lambda^{\loc}(D)
&\ge \calM_\Lambda^{\rm cl}(\Theta_\Lambda D)-\Err_\Lambda(D),\label{eq:det-compare}\\
\Err_\Lambda(D)
&\le \eta_\Lambda\dist_{\loc}(D,\Gamma_\Lambda^{\loc})+\Delta_\Lambda.
\label{eq:err-budget}
\end{align}

\begin{theorem}[Conditional local-to-clean transfer]\label{thm:local-clean}
If the clean detector has gap $\mu_\Lambda$ and
\[
\eta_\Lambda<\mu_\Lambda(1-\eps_G),
\]
then
\begin{equation}\label{eq:local-clean}
\calM_\Lambda^{\loc}(D)
\ge
\bigl(\mu_\Lambda(1-\eps_G)-\eta_\Lambda\bigr)
\dist_{\loc}(D,\Gamma_\Lambda^{\loc})
-
\mu_\Lambda\delta_G-\Delta_\Lambda.
\end{equation}
\end{theorem}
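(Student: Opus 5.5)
The plan is to chain the three hypotheses \eqref{eq:det-compare}, \eqref{eq:err-budget}, and \eqref{eq:quot-lift}, in that order, with the clean gap \eqref{eq:finite-gap} inserted between the first two and the last. Fix $D\in D_\Lambda^{\loc}$ and write $\delta=\dist_{\loc}(D,\Gamma_\Lambda^{\loc})$. The argument is a purely algebraic chain of inequalities; no further PDE input is required.

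The first step starts from the detector comparison \eqref{eq:det-compare}, which gives $\calM_\Lambda^{\loc}(D)\ge\calM_\Lambda^{\rm cl}(\Theta_\Lambda D)-\Err_\Lambda(D)$. Next I apply the clean gap (the hypothesis that the clean detector satisfies \eqref{eq:finite-gap} with constant $\mu_\Lambda$, as produced for instance by \cref{thm:finite-gap}) to the clean defect $\Theta_\Lambda D$. This yields
\[
\calM_\Lambda^{\rm cl}(\Theta_\Lambda D)\ge\mu_\Lambda\dist_{\rm cl}(\Theta_\Lambda D,\Gamma_\Lambda^{\rm cl}).
\]
Multiplying the quotient-lifting bound \eqref{eq:quot-lift} by $\mu_\Lambda>0$ then gives the lower bound $\mu_\Lambda(1-\eps_G)\delta-\mu_\Lambda\delta_G$. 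Finally, the error budget \eqref{eq:err-budget} bounds the subtracted term by $\eta_\Lambda\delta+\Delta_\Lambda$.

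Collecting terms gives
\[
\calM_\Lambda^{\loc}(D)\ge\bigl(\mu_\Lambda(1-\eps_G)-\eta_\Lambda\bigr)\delta-\mu_\Lambda\delta_G-\Delta_\Lambda,
\]
which is exactly \eqref{eq:local-clean}. The smallness condition $\eta_\Lambda<\mu_\Lambda(1-\eps_G)$ is not used to derive the inequality. It only guarantees that the resulting coefficient is positive, so that the estimate is a genuine gap rather than a vacuous bound.

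The only point needing care, and the closest thing to an obstacle, is the sign bookkeeping. Multiplying \eqref{eq:quot-lift} by $\mu_\Lambda$ needs $\mu_\Lambda\ge0$, which holds. We also need $1-\eps_G$ to enter only through that multiplied inequality, so that no division by it is required. Both conditions are satisfied, so the proof should be a few lines.
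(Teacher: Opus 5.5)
Your proposal is correct and is the same argument as the paper's: apply the clean gap to $\Theta_\Lambda D$ and chain it with \eqref{eq:quot-lift}, \eqref{eq:det-compare}, and \eqref{eq:err-budget}; the paper only lists these steps in a slightly different order. You are also right that $\eta_\Lambda<\mu_\Lambda(1-\eps_G)$ is not needed to derive the inequality and serves only to make the leading coefficient positive.
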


\begin{proof}
Apply the clean gap to $\Theta_\Lambda D$, then use \eqref{eq:quot-lift}, \eqref{eq:det-compare}, and \eqref{eq:err-budget} in that order.
\end{proof}

The theorem correctly isolates the residuals that must be paid: pressure tails, cutoff leakage, projection and truncation loss, nonlinear mismatch, reproduction drift, gauge synchronization, and tax discrepancy. Yet it is an interface theorem. The hard PDE question is whether the coefficient $\eta_\Lambda$ is below the clean gap margin and whether the additive errors are summable or negligible along a singular branch.

\subsection{What is closed and what remains conditional}

The finite-window calculus has genuinely closed the following issues:
\begin{itemize}
\item the need to evaluate all residuals on synchronized representatives;
\item finite-dimensional pressure-tail approximation and harmonic polynomial tails;
\item componentwise residual-ledger assembly;
\item clean quotient compactness and singular-value criteria;
\item algebraic transfer once the normalized error bounds are available.
\end{itemize}
It has not proved, for arbitrary suitable weak solutions and moving windows:
\begin{itemize}
\item a scale-uniform clean gap;
\item an intrinsic chart lower bound;
\item a uniform component-to-baseline comparison;
\item summable pressure, localization, and reproduction errors;
\item extraction of a nonzero finite-window defect from every non-CKN scale.
\end{itemize}

\section{Recursive audit chains and the gluing boundary}\label{sec:recursive}

\subsection{One-step admissibility}

A recursive package map has the schematic form
\begin{equation}\label{eq:one-step-map}
D_k\longmapsto D_{k+1}=\calR_kD_k,
\end{equation}
where the operation consists of restriction, Navier--Stokes rescaling, representative synchronization, and recomputation of pressure, source, localization, gate, detector, residual, and quotient coordinates. Let $E_k$ be the accumulated audit error and $\Delta_k$ the one-step increment.

\begin{assumption}[Recursive error inequality]\label{ass:recursive-error}
There exist coefficients $a_k,b_k\ge0$ such that
\begin{equation}\label{eq:error-recursion}
E_{k+1}\le a_kE_k+b_k\Delta_k.
\end{equation}
\end{assumption}

Suppose a static audit certificate at scale $k$ has the form
\begin{equation}\label{eq:static-cert}
M_k\ge c_k\delta_k-E_k,
\end{equation}
where $\delta_k$ is a baseline defect distance and $M_k$ is a localized detection size.

\begin{proposition}[Finite-chain recursive lower bound]\label{prop:recursive-sum}
Under \eqref{eq:error-recursion} and \eqref{eq:static-cert}, for every nonnegative weights $w_k$,
\begin{equation}\label{eq:recursive-sum}
\sum_{k=0}^Kw_kM_k
\ge
c_K^{\min}\sum_{k=0}^Kw_k\delta_k
-
E_K^{\rm rec},
\end{equation}
where $c_K^{\min}=\min_{0\le k\le K}c_k$ and $E_K^{\rm rec}$ is the explicit weighted sum obtained by iterating \eqref{eq:error-recursion}.
\end{proposition}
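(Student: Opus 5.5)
The plan is to multiply the static certificate \eqref{eq:static-cert} by $w_k\ge0$ and sum over $0\le k\le K$. This gives
\[
\sum_{k=0}^Kw_kM_k\ge\sum_{k=0}^Kw_kc_k\delta_k-\sum_{k=0}^Kw_kE_k .
\]
Since $\delta_k$ is a distance, $\delta_k\ge0$. Together with $w_k\ge0$ this gives $w_kc_k\delta_k\ge c_K^{\min}w_k\delta_k$ termwise, which produces the first term of \eqref{eq:recursive-sum}. No cancellation or sign information beyond nonnegativity is used. If some $c_k$ were allowed to be negative, $c_K^{\min}$ would still be the correct lower coefficient, because $c_k\ge c_K^{\min}$ and $w_k\delta_k\ge0$.

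It then remains to identify $E_K^{\rm rec}:=\sum_{k=0}^Kw_kE_k$ and to replace each $E_k$ by an explicit upper bound. To get that bound I will iterate \eqref{eq:error-recursion} by induction on $k$. Using $a_k,b_k\ge0$, so that the inequalities are preserved under multiplication, the induction yields
\[
E_k\le\Bigl(\prod_{j=0}^{k-1}a_j\Bigr)E_0+\sum_{i=0}^{k-1}\Bigl(\prod_{j=i+1}^{k-1}a_j\Bigr)b_i\Delta_i ,
\]
with empty products equal to one. The induction step is just $E_{k+1}\le a_kE_k+b_k\Delta_k$ with the hypothesis inserted into the first term.

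Substituting this bound, weighted by $w_k\ge0$, defines
\[
E_K^{\rm rec}=\sum_{k=0}^Kw_k\Bigl[\Bigl(\prod_{j<k}a_j\Bigr)E_0+\sum_{i<k}\Bigl(\prod_{i<j<k}a_j\Bigr)b_i\Delta_i\Bigr].
\]
Because this is an upper bound for $\sum_kw_kE_k$, the inequality \eqref{eq:recursive-sum} follows.

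There is no genuine obstacle here. The only care point is sign bookkeeping, in two places. First, the error terms must enter with a minus sign, so they may only be replaced by larger quantities; this uses $w_k\ge0$ and the nonnegativity of the coefficients in the iteration. Second, the lower coefficient must be extracted using $\delta_k\ge0$. The statement is finite-chain only. Nothing is claimed about whether $E_K^{\rm rec}$ remains bounded as $K\to\infty$, and that would require summability of the products $\prod a_j$ and of the $b_i\Delta_i$.
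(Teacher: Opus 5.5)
Your proof is correct and matches the paper's argument: iterate the error recursion to bound each $E_k$ by the propagated initial error plus the weighted increments, insert the bound into the static certificate, multiply by $w_k\ge0$, sum, and replace $c_k$ by $c_K^{\min}$. You also state explicitly that this last replacement needs $\delta_k\ge0$ and $w_k\ge0$, which the paper uses without saying so.
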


\begin{proof}
Iterate \eqref{eq:error-recursion} to express each $E_k$ as the propagated initial error plus a sum of the increments $\Delta_j$ multiplied by products of the coefficients $a_i$ and $b_j$. Insert these bounds into \eqref{eq:static-cert}, multiply by $w_k$, and sum. Replacing $c_k$ by $c_K^{\min}$ gives \eqref{eq:recursive-sum}.
\end{proof}

\subsection{Why finite-chain propagation is not enough}

The proposition has no infinite-scale consequence unless one proves at least:
\begin{equation}\label{eq:recursive-needs}
\inf_k c_k>0,
\qquad
\sum_k\text{propagated }\Delta_k<\infty,
\qquad
\sup_k\prod_{j<k}a_j<\infty
\end{equation}
in an appropriate weighted sense. None of these is a consequence of finite-dimensional compactness alone.

There is also a gluing obstruction. Even if every finite window is cleanable, the cleanings may fail to form a compatible inverse-system section. In algebraic language, finite exactness does not by itself imply exactness after taking an inverse limit; a derived-limit obstruction may remain. In PDE language, this is the possibility that every finite shadow can be corrected, while the correction costs or gauges fail to glue along an infinite scale chain.

Thus recursive audit theory identifies the correct transition problem, but it does not produce a scale-uniform normal form.

\section{The detector-to-CKN bottleneck}\label{sec:bottleneck}

\subsection{Full, coarse, and residual badness}

Fix a spatial coarse length $\ell>0$. Let $U^\ell=S_\ell u$ and $P^\ell=S_\ell p$. On a cylinder $Q_r(z_0)$ define
\begin{align}
\Psi(z_0,r)
&=r^{-2}\int_{Q_r(z_0)}
\left(|u|^3+|\mathbb P_{r,z_0}p|^{3/2}\right)\dxdt,\label{eq:full-badness}\\
\Psi^\ell(z_0,r)
&=r^{-2}\int_{Q_r(z_0)}
\left(|U^\ell|^3+|\mathbb P_{r,z_0}P^\ell|^{3/2}\right)\dxdt,\label{eq:coarse-badness}\\
\Omega^\ell(z_0,r)
&=r^{-2}\int_{Q_r(z_0)}
\left(|u-U^\ell|^3+
|\mathbb P_{r,z_0}(p-P^\ell)|^{3/2}\right)\dxdt,
\label{eq:residual-badness}
\end{align}
where
\[
\mathbb P_{r,z_0}f=f-(f)_{B_r(x_0)}(t).
\]

\subsection{Resolution lemma}

\begin{theorem}[Resolution lemma]\label{thm:resolution}
For every cylinder on which the quantities above are finite,
\begin{equation}\label{eq:resolution}
\Psi(z_0,r)\le4\Psi^\ell(z_0,r)+4\Omega^\ell(z_0,r).
\end{equation}
Consequently, if $\Psi(z_0,r)\ge\eps_0$ and
$\Omega^\ell(z_0,r)\le\eta\eps_0$ for some $0<\eta<1/4$, then
\begin{equation}\label{eq:coarse-visible}
\Psi^\ell(z_0,r)\ge\left(\frac14-\eta\right)\eps_0.
\end{equation}
\end{theorem}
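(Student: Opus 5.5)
The plan is a pointwise convexity argument applied separately to the velocity and pressure integrands, followed by integration and a one-line rearrangement for the consequence. The velocity part needs no structure: write $u=U^\ell+(u-U^\ell)$. By convexity of $s\mapsto s^3$ on $[0,\infty)$,
\[
|a+b|^3\le(|a|+|b|)^3\le 4\bigl(|a|^3+|b|^3\bigr).
\]
This follows from $\bigl(\tfrac{x+y}{2}\bigr)^3\le\tfrac{x^3+y^3}{2}$, i.e.\ $(x+y)^3\le 4(x^3+y^3)$. So $|u|^3\le 4|U^\ell|^3+4|u-U^\ell|^3$ pointwise.

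For the pressure, the key observation is that $\mathbb P_{r,z_0}$ is linear: subtracting the spatial ball average at each fixed time commutes with sums. Hence
\[
\mathbb P_{r,z_0}p=\mathbb P_{r,z_0}P^\ell+\mathbb P_{r,z_0}(p-P^\ell)
\]
pointwise almost everywhere, provided each average is finite, which is the finiteness hypothesis. Convexity of $s\mapsto s^{3/2}$ gives
\[
|a+b|^{3/2}\le 2^{1/2}\bigl(|a|^{3/2}+|b|^{3/2}\bigr)\le 4\bigl(|a|^{3/2}+|b|^{3/2}\bigr).
\]
Adding the two pointwise bounds, integrating over $Q_r(z_0)$, and multiplying by $r^{-2}$ yields \eqref{eq:resolution}. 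The constant $4$ is simply the larger of $4$ and $\sqrt2$.

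For \eqref{eq:coarse-visible}, I will combine $\eps_0\le\Psi\le 4\Psi^\ell+4\Omega^\ell\le 4\Psi^\ell+4\eta\eps_0$ and divide by $4$. This gives $\Psi^\ell\ge(\tfrac14-\eta)\eps_0$, which is positive since $\eta<1/4$.

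There is no genuine obstacle here. The only point to state carefully is gauge invariance. The pressure enters only through $\mathbb P_{r,z_0}$, so adding $a(t)$ to $p$ changes neither $\mathbb P_{r,z_0}p$ nor $\mathbb P_{r,z_0}(p-P^\ell)$. The reason is that $S_\ell a(t)=a(t)$, since the mollifier is spatial with unit mass. The statement is therefore well defined on pressure gauge classes. I will also note that harmonic pressure parts are \emph{not} removed; they are kept in both $\Psi^\ell$ and $\Omega^\ell$, consistent with the paper's convention in \cref{sec:packages}.
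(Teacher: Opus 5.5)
Your proposal is correct and follows the paper's proof essentially verbatim: the pointwise bounds $|a+b|^3\le4(|a|^3+|b|^3)$ and $|a+b|^{3/2}\le2^{1/2}(|a|^{3/2}+|b|^{3/2})$, linearity of $\mathbb P_{r,z_0}$, the crude bound $2^{1/2}\le4$, and the one-line rearrangement for the consequence. Your gauge-invariance remark is a correct addition, but the inequality itself does not need it.
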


\begin{proof}
Write
\[
u=U^\ell+(u-U^\ell).
\]
The inequality $|a+b|^3\le4(|a|^3+|b|^3)$ gives the velocity bound. Since the pressure projector is linear,
\[
\mathbb P_{r,z_0}p
=
\mathbb P_{r,z_0}P^\ell
+
\mathbb P_{r,z_0}(p-P^\ell).
\]
Using $|a+b|^{3/2}\le2^{1/2}(|a|^{3/2}+|b|^{3/2})$ gives the pressure bound. Divide by $r^2$ and use $2^{1/2}\le4$. Rearranging yields \eqref{eq:coarse-visible}.
\end{proof}

The lemma proves only
\begin{equation}\label{eq:first-bridge}
\text{full CKN badness}
\Longrightarrow
\text{coarse CKN badness or subfilter residual}.
\end{equation}
It does not prove that coarse badness produces detectable work.

\subsection{The signed work detector}

The natural coarse work distribution is
\begin{equation}\label{eq:G}
G^\ell
=
\Pi^\ell+\nabla\cdot(P^\ell U^\ell),
\qquad
\Pi^\ell=-R^\ell:\nabla U^\ell.
\end{equation}
For a test function $\varphi$,
\[
\ip{G^\ell}{\varphi}
=
\int \varphi\Pi^\ell\dxdt
-
\int P^\ell U^\ell\cdot\nabla\varphi\dxdt.
\]
A finite or infinite detector seminorm has the form
\begin{equation}\label{eq:detector-seminorm}
A_r(G^\ell)
=
\sup_{\psi\in\calT_r,\ \norm{\psi}{X_r}\le1}
r^{-1}\abs{\ip{G^\ell}{\chi_r\psi}}.
\end{equation}
The structural mismatch is immediate: $\Psi$ is positive and norm-like, while $G^\ell$ is signed and cancellation-sensitive.

\subsection{No direct single-scale domination}

\begin{proposition}[No unconditional direct detector domination]\label{prop:no-direct}
Let $A_r$ be any seminorm of the form \eqref{eq:detector-seminorm}, so that $A_r(G)=0$ whenever $G=0$. There is no universal constant $C$ such that every smooth local Navier--Stokes solution satisfies
\begin{equation}\label{eq:false-direct}
\Psi(z_0,r)\le C A_r(G^\ell)
\end{equation}
for a fixed $r>0$ and fixed $\ell>0$.
\end{proposition}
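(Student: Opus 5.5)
The plan is to construct a single explicit smooth Navier--Stokes solution for which $G^\ell\equiv0$ on the relevant cylinder while $\Psi(z_0,r)>0$. Then \eqref{eq:false-direct} would read $0<\Psi\le C\cdot0$, which fails for every constant $C$. Because \eqref{eq:detector-seminorm} is linear in $G$ through the pairing, it vanishes whenever $G^\ell=0$, so nothing about $\calT_r$, $X_r$ or $\chi_r$ needs to be controlled.

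\emph{Choice of flow.} I would take a spatially constant, time-dependent velocity $u(x,t)=a(t)$ with pressure $p(x,t)=-a'(t)\cdot x$, where $a$ is smooth and not identically zero on $(t_0-r^2,t_0)$. This pair solves \eqref{eq:NS} exactly: $\partial_tu=a'$, $\Delta u=0$, $(u\cdot\nabla)u=0$ and $\nabla p=-a'$. It is also divergence free.

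\emph{The detector vanishes.} Since $\rho_\ell$ is nonnegative with unit mass, $U^\ell=a(t)$. Hence $R^\ell=S_\ell(a\otimes a)-a\otimes a=0$ and $\Pi^\ell=0$. The only remaining term in \eqref{eq:G} is $\nabla\cdot(P^\ell U^\ell)=a\cdot\nabla P^\ell$. Here the pressure must be adjusted, because $\nabla P^\ell=-a'$ gives $a\cdot\nabla P^\ell=-a\cdot a'=-\tfrac12\tfrac{d}{dt}|a|^2$.

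\emph{Main obstacle and its resolution.} The difficulty is making this pressure work vanish while keeping $\Psi>0$. The simplest fix is to take $a'(t)\perp a(t)$, for example a rotating vector $a(t)=(\cos t,\sin t,0)$. Then $|a|$ is constant and $a\cdot a'=0$, so $G^\ell\equiv0$. An even simpler alternative is the constant state $a\neq0$, $p=0$, which is also a smooth solution.

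\emph{Positivity of the CKN quantity.} In either case $\Psi(z_0,r)\ge r^{-2}\int_{Q_r}|a|^3\,dx\,dt=c\,r^3>0$, with $c=\tfrac{4\pi}{3}|a|^3>0$. The pressure contribution is nonnegative and can only add to this. This contradicts \eqref{eq:false-direct} for any $C$, at every fixed $r$ and $\ell$.

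If one objects that constant flows are trivial, the same mechanism extends to a nontrivial example. Adding a small divergence-free shear term changes $G^\ell$ only through a signed distribution, and one can choose its sign so that the contributions cancel against the test class. I would only mention this as a remark, since the constant example already proves the statement as posed.
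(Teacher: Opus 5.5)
Your proof is correct and is essentially the paper's argument: the paper uses precisely your fallback example $u\equiv a\neq0$, $p=0$, for which $U^\ell=a$, $R^\ell=0$, $P^\ell=0$, so $G^\ell=0$ and $A_r(G^\ell)=0$ while $\Psi(z_0,r)=|B_1|\,|a|^3r^3>0$; your rotating variant $a(t)=(\cos t,\sin t,0)$, $p=-a'(t)\cdot x$ is equally valid and slightly stronger, since it shows that even a nonzero linear (hence harmonic) pressure can be invisible to $G^\ell$. Your closing shear remark is not justified as phrased, because a supremum over a test class cannot be defeated by choosing a sign; it is also unnecessary, and in fact a unidirectional heat-equation shear $u=f(x_2,x_3,t)e_1$ with $p=0$ already gives $R^\ell\propto e_1\otimes e_1$ and $\partial_1U^\ell=0$, hence $\Pi^\ell=0$ and $G^\ell\equiv0$ exactly.
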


\begin{proof}
Let $u(x,t)=a\in\R^3$ be a spatially and temporally constant velocity and let $p=0$. This is a smooth Navier--Stokes solution. Spatial coarse graining leaves $U^\ell=a$, while $R^\ell=0$, $P^\ell=0$, and therefore $G^\ell=0$. Hence $A_r(G^\ell)=0$. On the other hand,
\[
\Psi(z_0,r)
=r^{-2}\int_{Q_r(z_0)}|a|^3\dxdt
=|B_1|\,|a|^3r^3>0
\]
for $a\ne0$. Thus \eqref{eq:false-direct} fails.
\end{proof}

\begin{remark}
The example is regular and its CKN quantity decays as $r\downarrow0$. It does not produce a persistent bad branch. Its role is sharper and more limited: it rules out the proposed \emph{single-scale universal domination}. Any valid bridge must use persistence, decay failure, normalization, dissipation, or an explicit silent alternative.
\end{remark}

\subsection{Fixed-chain combined-work depletion}

At fixed $\ell>0$ and on a finite chain of adjacent slabs, the combined work identity gives selected weights with work $W_k$, resolved dissipation $D_k$, leakage $L_k$, and weights $w_k=r_k/r_0$ satisfying
\begin{equation}\label{eq:work-depletion}
\sum_{k=0}^{N-1}w_k(W_k^++D_k)
\le
\calE_0
+
\sum_{k=0}^{N-1}w_k|L_k|
+
\sum_{k=0}^{N-1}w_kW_k^-.
\end{equation}
Here $\calE_0$ denotes the initial resolved kinetic-energy budget available on the first slab of the fixed chain.
The active extraction constant is the smallest singular value of an explicit finite matrix. The theorem closes a genuine PDE input isolated in the earlier audit program: detected combined work is paid by a finite kinetic-energy ledger. It does not show that a CKN-bad scale is detected, that the forward branch occurs, or that leakage and backscatter are summable.

\subsection{Silent mechanisms and the correct dichotomy}

The direct bridge must be replaced by
\begin{equation}\label{eq:dichotomy}
\boxed{
\text{CKN badness}
\Longrightarrow
\text{detectable coarse work}
\quad\text{or}\quad
\text{an explicit silent mechanism}.
}
\end{equation}
The recurrent silent mechanisms identified by the existing framework are:
\begin{enumerate}
\item \textbf{Subfilter residual:} $\Omega^\ell(r)$ remains non-negligible, so the badness lies below the detector resolution.
\item \textbf{Harmonic pressure tail:} the local harmonic pressure carries work or oscillation not captured by the active pressure projection.
\item \textbf{Pressure--flux cancellation:} pressure and flux channels are individually active while their signed sum is small.
\item \textbf{Coherent low-frequency flow:} a large smooth resolved field is CKN-large at a selected scale but has negligible combined work.
\item \textbf{Backscatter:} the selected work lies on the negative branch and refills rather than depletes the forward ledger.
\item \textbf{Moving-window collapse:} every fixed window is coercive but the constants degenerate along the scale-selected sequence.
\end{enumerate}
These are not cosmetic error terms. Each is a candidate mechanism by which the current detector can fail to be a coordinate for full CKN badness.

\section{Why the existing structures reach a wall}\label{sec:wall}

\subsection{Reduction is not closure}

Many results in the program have the logical form
\begin{equation}\label{eq:interface-form}
\text{structural inputs}
\Longrightarrow
\text{finite-window lower bound}.
\end{equation}
The difficulty has therefore been moved, not eliminated. The structural inputs are often exactly the hard PDE estimates: scale-uniform chart visibility, residual absorption, moving-window control, and NS-realizable kernel exclusion.

\subsection{The ledger is not a Lyapunov function}

The ledger \eqref{eq:full-ledger} has the form
\[
\text{new reservoir}-\text{expected old reservoir}
\le
\text{supply}-\text{tax}+\text{leakage}.
\]
A Lyapunov mechanism would require a theorem such as
\begin{equation}\label{eq:uniform-taxation}
\Sup_k^{\rm full}
\le
\Tax_k^{\rm full}
+\text{summable error}
\end{equation}
for every NS-realizable branch, or an alternative that converts every excess supply into a quantity known to be finite. Such a theorem is not presently available.

\subsection{Fixed-window exactness is not scale-uniform exactness}

The positive finite-window gap $\mu_\Lambda$ is obtained by compactness of a fixed quotient unit sphere. Nothing prevents
\begin{equation}\label{eq:gap-collapse}
\mu_{\Lambda_n}\downarrow0
\end{equation}
as the active window, chain depth, spatial aspect ratio, or pressure-tail resolution changes. Pure gluing observables already exhibit $O(L^{-1})$ singular-value decay on a chain of length $L$, and terminal parabolic observations may be exponentially weak at high frequency. Thus uniform observability is first a compactness-and-separation theorem, not a matrix theorem.

\subsection{Obstruction containers are not normal forms}

The phrase
\[
\text{NS-realizable, cleaned, scale-critical, combined-invisible, profitable, reproducible cascade}
\]
correctly records many necessary properties of a dangerous branch. But it is still an obstruction container: it permits many dynamical geometries. A genuine compactness-rigidity structure would force a minimal obstruction into a substantially narrower class, such as an ancient solution, a scale-stationary orbit, a self-similar profile, or a monotone defect measure. The present framework has not yet produced that normal form.

\subsection{The old observables may be insufficient}

The one-component Schur-visibility analysis gives a useful negative lesson. An abstract envelope containing all previously selected observables can still permit arbitrary-slow branches unless one adds a genuinely Navier--Stokes-specific compatibility channel, such as vertical-pressure visibility or vertical-duality control \cite{YuSchur2026}. This is evidence for a general principle:
\begin{equation}\label{eq:observable-principle}
\boxed{
\text{Adding more observables helps only if the equation makes them coercively compatible.}
}
\end{equation}
A larger list of coordinates without a new compatibility estimate increases bookkeeping without decreasing the obstruction class.

\section{What is missing: the new PDE estimates}\label{sec:new-estimates}

This section states the estimates forced by the preceding audit. They are theorem targets, not claims proved here unless explicitly stated. The list should now be read with a hierarchy: the most concrete first target is not another detector comparison theorem, but a filtered stretching--diffusion estimate that speaks directly to the mechanism by which three-dimensional badness could be rebuilt.

\subsection{A non-tautological defect extraction theorem}

The first missing bridge is from persistent CKN badness to a nonzero object in the chosen obstruction geometry.

\begin{problem}[Defect extraction]\label{prob:defect-extraction}
Assume $\Phi(z_0,r_k)\le M$ and $\Psi(z_0,r_k)\ge\eps_{\CKN}$ along an infinite scale sequence. Construct, after rescaling and subsequence extraction, a normalized NS-realizable obstruction that is nonzero in a topology not containing a disguised copy of the full CKN norm.
\end{problem}

A candidate package may contain
\begin{equation}\label{eq:candidate-defect-package}
\mathfrak D
=(u_*,p_*;\nu_E,\nu_{\rm LEI},[h],\calR,\calT),
\end{equation}
where $\nu_E$ is an energy or dissipation concentration measure, $\nu_{\rm LEI}$ is a local-energy defect, $[h]$ is a retained harmonic-pressure class, $\calR$ is an unresolved residual, and $\calT$ is a scale-transition law. One may also record weak limits of the $L^3$ and $L^{3/2}$ densities when they carry genuine concentration.

A caution is necessary. Under a full local critical bound, standard suitable-weak compactness often gives strong $L^3_{\loc}$ convergence of velocity and, after pressure decomposition, strong interior convergence of mean-free pressure. Therefore velocity and pressure concentration measures may vanish in precisely the regime where one hopes to extract a nontrivial obstruction. The canonical defect may reside not in a positive measure but in scale recurrence, transition failure, harmonic tails, or normalization. The extraction theorem should discover the correct object rather than presuppose that it is measure-valued.

\subsection{A useful subfilter estimate}

The resolution lemma is algebraic. To make it effective one needs a scale-uniform estimate on $\Omega^\ell$.

\begin{problem}[Subfilter compactness]\label{prob:subfilter}
Find a critical modulus $\omega_M$ such that, on an appropriate normalized class,
\begin{equation}\label{eq:subfilter-target}
\Omega^\ell(z_0,r)
\le
\omega_M(\ell/r),
\qquad
\omega_M(s)\to0\quad(s\downarrow0),
\end{equation}
possibly after separating an explicit harmonic pressure tail.
\end{problem}

An elementary starting point is the local increment bound
\begin{equation}\label{eq:increment-target}
\Omega^\ell(z_0,r)
\lesssim
r^{-2}\sup_{|h|\le\ell}
\int_{Q_r(z_0)}|u(x+h,t)-u(x,t)|^3\dxdt
+\text{pressure increment term}.
\end{equation}
A Besov or compactness modulus would turn \eqref{eq:increment-target} into a power of $\ell/r$. The obstacle is that such a modulus is not available for arbitrary suitable weak solutions at the CKN threshold.

\subsection{Caloric leakage absorption}

The localization functional in a resolved energy identity contains
\[
\frac12|U|^2(\partial_t\phi+\Delta\phi),
\qquad
\frac12|U|^2U\cdot\nabla\phi,
\qquad
(RU)\cdot\nabla\phi.
\]
A promising design principle is to choose nonnegative detector weights that are approximately backward caloric on the observation core.

\begin{problem}[Caloric leakage absorption]\label{prob:leakage}
Construct an admissible detector class for which
\begin{equation}\label{eq:leakage-absorption}
\sum_kw_k|L_k|
\le
\eta\sum_kw_k A_k^\infty(G^\ell)
+C_\eta(M)
\end{equation}
with $\eta$ smaller than the detector extraction coefficient, or prove a comparable annular packing estimate.
\end{problem}

The difficulty is structural. Detector richness favors flexible profiles, while leakage control favors caloric profiles. A successful theorem must show that a finite norming family can be chosen inside a low-leakage test class.

\subsection{Backscatter control under geometric hypotheses}

Since
\[
\Pi^\ell=-R^\ell:S(U^\ell),
\qquad
S(U)=\frac12(\nabla U+\nabla U^T),
\]
and $R^\ell\ge0$, the sign of the flux is determined by how the covariance aligns with compressive and expansive strain directions. No unconditional sign is available.

\begin{problem}[Conditional backscatter control]\label{prob:backscatter}
Identify a geometric or recurrence hypothesis under which
\begin{equation}\label{eq:backscatter-target}
\sum_{k\in I_-}w_kW_k^-
\le
\gamma\sum_{k\in I_+}w_kW_k^+
+C(M),
\qquad 0\le\gamma<1.
\end{equation}
\end{problem}

Possible hypotheses include averaged strain alignment, forward sign coherence, restrictions on the density of negative-work slabs, or an enstrophy-production inequality. The theorem should not conceal the sign assumption in the definition of a selected window.

\subsection{Pressure-tail compactness}

Finite-dimensional harmonic polynomial approximation is already available. What is missing is an equation-derived compactness theorem for the moving family of active pressure images and retained harmonic tails.

\begin{problem}[Pressure-tail compactness]\label{prob:pressure-tail}
For a normalized NS-generated class, prove uniform finite-rank approximation of the active pressure image and a summable or decaying bound for the retained harmonic pressure work on nested cores.
\end{problem}

The active part should be controlled by compactness of $u\otimes u$ under a topology strong enough for the Calderon--Zygmund map. The harmonic part should use interior analytic decay, but the time-dependent amplitudes and moving centers must be controlled rather than quotiented away.

\subsection{Bad-but-silent profile classification}

This is the most direct test of whether the present work observable is regularity-relevant.

\begin{problem}[Bad-but-silent compactness classification]\label{prob:silent-classification}
Let $(u^{(n)},p^{(n)})$ be normalized suitable weak solutions with persistent coarse badness and
\[
A_k^\infty(G_n^\ell)\to0.
\]
Classify every subsequential limit satisfying $G_*^\ell=0$. Show that each such limit is either regular/decaying, subfilter-dominated, harmonic-pressure-dominated, cancellation-dominated, coherent low-frequency, or backscatter-dominated.
\end{problem}

A classification theorem is valuable even if some classes cannot be excluded. It reveals whether the detector misses a genuine singular mechanism or merely smooth scale-local amplitude.

\subsection{Filtered stretching--diffusion balance}

The most mechanism-facing new estimate is a stretching--diffusion balance. The tempting smooth-level quantities are
\begin{equation}\label{eq:smooth-stretching-quantities}
\Omega_
u(r)=r^{-1}\iint_{Q_r}|\omega|^2\dxdt,
\qquad
P_
u(r)=r\iint_{Q_r}|\nabla\omega|^2\dxdt,
\qquad
V_
u^+(r)=r\iint_{Q_r}(S\omega\cdot\omega)_+\dxdt.
\end{equation}
These are scale invariant, and they express the correct physical competition. However, $P_u(r)$ is not a legitimate starting quantity for arbitrary suitable weak solutions. At the suitable-weak level one has $\nabla u\in L^2$, hence $\omega\in L^2$, but not in general $\nabla\omega\in L^2$. Therefore the first robust estimate must be formulated after spatial filtering.

\begin{problem}[Filtered stretching--diffusion surplus]\label{prob:filtered-stretching-surplus}
Let $U^\ell=S_\ell u$, $\Omega^\ell=\nabla\times U^\ell$, and $S^\ell=(\nabla U^\ell+(\nabla U^\ell)^T)/2$. Prove, on a normalized suitable-weak class and for relative filter scales $\ell/r$ bounded away from $0$ and $\infty$, an estimate of the schematic form
\begin{equation}\label{eq:filtered-stretching-surplus}
\mathsf V_{r,\ell}^+
\le
(1-\eps_*)\mathsf P_{r,\ell}
+C(M)\mathsf O_{r,\ell}
+C\mathsf A_{r,\ell}
+\mathsf R_{r,\ell}
+\mathsf L_{r,\ell}.
\end{equation}
Here
\[
\mathsf V_{r,\ell}^+=r\iint_{Q_r}\chi_r(S^\ell\Omega^\ell\cdot\Omega^\ell)_+,
\quad
\mathsf P_{r,\ell}=r\iint_{Q_r}\chi_r|\nabla\Omega^\ell|^2,
\quad
\mathsf O_{r,\ell}=r^{-1}\iint_{Q_r}\chi_r|\Omega^\ell|^2,
\]
$\mathsf A_{r,\ell}$ is a vorticity-direction incoherence defect, $\mathsf R_{r,\ell}$ is the subgrid vorticity-transfer defect, and $\mathsf L_{r,\ell}$ is cutoff or caloric leakage.
\end{problem}

The estimate should be interpreted as a finite-scale obstruction to Euler-type stretching cascades: if positive stretching remains large, then either diffusion absorbs it or the branch pays through direction incoherence, unresolved subfilter forcing, leakage, or pressure-compatible transfer. This target is closer to the three-dimensional mechanism than a one-component power approximation estimate.

\subsection{Minimal obstruction rigidity}

The long-term compactness-rigidity target is to minimize a coercive defect size over the nonzero obstruction class.

\begin{problem}[Minimal obstruction rigidity]\label{prob:minimal-rigidity}
Define an admissible obstruction class closed under rescaling and compact limits. Prove existence of a minimal nonzero obstruction $\mathfrak D_{\min}$ and show that minimality forces a rigid transition law, for example
\begin{equation}\label{eq:scale-stationary}
\calS_\theta\mathfrak D_{\min}
\simeq
\mathfrak D_{\min}
\end{equation}
in an appropriate quotient. Then exclude all such rigid objects.
\end{problem}

This is the point at which the framework would become a genuine compactness-rigidity theory. Without a canonical size, compactness, and a strict rigidity theorem, ``minimal obstruction'' remains only terminology.

\section{Filtered vortex-stretching route}\label{sec:stretching-route}

\subsection{Why the unfiltered enstrophy route must be corrected}

A natural first attempt is to compare
\[
V^+(r)=r\iint_{Q_r}(S\omega\cdot\omega)_+\dxdt
\quad\text{and}\quad
P(r)=r\iint_{Q_r}|\nabla\omega|^2\dxdt.
\]
This is the right smooth-level intuition but not the right weak-level formulation. For suitable weak solutions the vorticity belongs to $L^2$ locally, while $\nabla\omega$ need not be locally square integrable. Therefore a theorem stated directly with $P(r)$ either assumes extra regularity or is only an a priori identity for smooth approximants. The robust object is the filtered vorticity at a relative scale $\ell=\sigma r$.

This correction is not cosmetic. It changes the route from a formal enstrophy argument to a genuine coarse-grained mechanism argument. The unresolved stress $R^\ell$ now appears in the vorticity equation and must be charged explicitly. This is exactly where the previous defect calculus can still help: it can organize subfilter transfer, leakage, pressure-tail effects, and moving-window losses, but the main estimate must be a PDE estimate for the filtered vorticity mechanism.

\subsection{Filtered vorticity identity}

Let $S_\ell$ be the spatial mollifier from \cref{sec:packages}. Set
\[
U^\ell=S_\ell u,
\qquad
P^\ell=S_\ell p,
\qquad
R^\ell=S_\ell(u\otimes u)-U^\ell\otimes U^\ell,
\]
and define
\[
\Omega^\ell=\nabla\times U^\ell,
\qquad
S^\ell=\frac12(\nabla U^\ell+(\nabla U^\ell)^T),
\qquad
\mathcal J^\ell=\nabla\times(\nabla\cdot R^\ell).
\]

\begin{proposition}[Filtered vorticity enstrophy identity]\label{prop:filtered-vorticity-identity}
On every interior cylinder and for every fixed $\ell>0$, the filtered vorticity satisfies
\begin{equation}\label{eq:filtered-vorticity-equation}
\partial_t\Omega^\ell-\Delta\Omega^\ell+(U^\ell\cdot\nabla)\Omega^\ell
=(\Omega^\ell\cdot\nabla)U^\ell-\mathcal J^\ell.
\end{equation}
Consequently,
\begin{equation}\label{eq:filtered-enstrophy-identity}
\partial_t\frac{|\Omega^\ell|^2}{2}
-\Delta\frac{|\Omega^\ell|^2}{2}
+U^\ell\cdot\nabla\frac{|\Omega^\ell|^2}{2}
+|\nabla\Omega^\ell|^2
=
S^\ell\Omega^\ell\cdot\Omega^\ell
-\Omega^\ell\cdot\mathcal J^\ell.
\end{equation}
\end{proposition}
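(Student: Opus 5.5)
The plan is to start from the coarse momentum equation \eqref{eq:coarse-momentum} of \cref{prop:coarse-package}, which holds on every interior cylinder for fixed $\ell>0$, and take its curl. Since $U^\ell$ is a spatial mollification, it is smooth in $x$ for a.e.\ $t$. All spatial derivatives of $U^\ell$, $P^\ell$, $R^\ell$ are locally bounded in $x$, with integrability in time inherited from $u\in L^\infty_tL^2_x\cap L^2_tH^1_x$ and $p\in L^{3/2}$. Hence every spatial manipulation below is classical. The time derivative is understood distributionally in $t$; since $\partial_t\Omega^\ell$ will be expressed through the equation as a locally integrable function, $\Omega^\ell$ is absolutely continuous in time with values in $C^\infty_x$ on compact subsets. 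This justifies the chain rule for $|\Omega^\ell|^2/2$.

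\textbf{Step 1 (vorticity equation).} Apply $\nabla\times$ to \eqref{eq:coarse-momentum}. The pressure gradient drops out: $\nabla\times\nabla P^\ell=0$. The heat operator commutes with the curl, giving $\partial_t\Omega^\ell-\Delta\Omega^\ell$. The right side gives $-\nabla\times(\nabla\cdot R^\ell)=-\mathcal J^\ell$.

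For the nonlinearity we use $\nabla\cdot U^\ell=0$ from \eqref{eq:coarse-div}, so that
\[
\nabla\cdot(U^\ell\otimes U^\ell)=(U^\ell\cdot\nabla)U^\ell=\nabla\tfrac12|U^\ell|^2-U^\ell\times\Omega^\ell .
\]
Taking the curl and using the standard identity
\[
\nabla\times(\Omega\times U)=(U\cdot\nabla)\Omega-(\Omega\cdot\nabla)U+\Omega(\nabla\cdot U)-U(\nabla\cdot\Omega),
\]
together with $\nabla\cdot U^\ell=0$ and $\nabla\cdot\Omega^\ell=0$, yields $(U^\ell\cdot\nabla)\Omega^\ell-(\Omega^\ell\cdot\nabla)U^\ell$. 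Rearranging gives \eqref{eq:filtered-vorticity-equation}.

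\textbf{Step 2 (enstrophy identity).} Dot \eqref{eq:filtered-vorticity-equation} with $\Omega^\ell$ and treat each term.
\begin{itemize}
\item Time derivative: $\Omega^\ell\cdot\partial_t\Omega^\ell=\partial_t\tfrac12|\Omega^\ell|^2$.
\item Diffusion: $-\Omega^\ell\cdot\Delta\Omega^\ell=-\Delta\tfrac12|\Omega^\ell|^2+|\nabla\Omega^\ell|^2$, which is the pointwise identity $\Delta\tfrac12|f|^2=f\cdot\Delta f+|\nabla f|^2$.
\item Transport: $\Omega^\ell\cdot(U^\ell\cdot\nabla)\Omega^\ell=U^\ell\cdot\nabla\tfrac12|\Omega^\ell|^2$.
\item Stretching: $(\Omega^\ell\cdot\nabla)U^\ell\cdot\Omega^\ell=\Omega^\ell_i\,\partial_iU^\ell_j\,\Omega^\ell_j$ is a quadratic form in $\Omega^\ell$. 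Only the symmetric part of $\nabla U^\ell$ contributes, so it equals $S^\ell\Omega^\ell\cdot\Omega^\ell$.
\item Subgrid term: this gives $-\Omega^\ell\cdot\mathcal J^\ell$.
\end{itemize}
Collecting these terms is exactly \eqref{eq:filtered-enstrophy-identity}. This identity is pointwise in $x$ and distributional in $t$. No integration by parts is needed, so no boundary or cutoff terms appear.

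\textbf{Main obstacle.} The algebra is routine; the only point requiring care is the regularity justification at the suitable-weak level. One must check that $\mathcal J^\ell$, which involves third derivatives of the mollified products $S_\ell(u\otimes u)$, and $\nabla P^\ell$ are locally integrable in time. One must also check that the time derivative is legitimate.

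I would handle this by moving all spatial derivatives onto $\rho_\ell$. For example, $\partial^\alpha S_\ell(u\otimes u)=(\partial^\alpha\rho_\ell)*(u\otimes u)$ is bounded by $C_{\ell,\alpha}\|u(t)\|_{L^2}^2$, uniformly in $t$ on interior cylinders. Similarly, $\partial^\alpha P^\ell$ is bounded by $C\|p(t)\|_{L^{3/2}}\in L^{3/2}_t$. Thus each term in \eqref{eq:filtered-vorticity-equation} lies in $L^1_tC^k_x$ locally, which suffices for the product rule in $t$.
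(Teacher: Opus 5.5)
Your proposal is correct and follows the paper's proof: you curl the coarse momentum equation \eqref{eq:coarse-momentum} to eliminate the pressure, then dot with $\Omega^\ell$ and use that only the symmetric part of $\nabla U^\ell$ contributes to the stretching term. Your regularity justification (moving all spatial derivatives onto $\rho_\ell$ to get local $L^1_tC^k_x$ bounds, hence absolute continuity in time) is a useful addition that the paper leaves implicit; one small slip is that $\mathcal J^\ell=\nabla\times(\nabla\cdot R^\ell)$ involves second, not third, spatial derivatives of $R^\ell$, which is harmless since you bound derivatives of all orders.
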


\begin{proof}
Taking curl of the filtered momentum equation in \eqref{eq:coarse-momentum} eliminates the pressure and gives \eqref{eq:filtered-vorticity-equation}. Dotting the result with $\Omega^\ell$ gives the transport and diffusion identities
\[
\Omega^\ell\cdot (U^\ell\cdot\nabla)\Omega^\ell
=U^\ell\cdot\nabla\frac{|\Omega^\ell|^2}{2},
\qquad
-\Omega^\ell\cdot\Delta\Omega^\ell
=-\Delta\frac{|\Omega^\ell|^2}{2}+|\nabla\Omega^\ell|^2.
\]
Finally, the antisymmetric part of $\nabla U^\ell$ does not contribute to $\Omega^\ell\cdot(\Omega^\ell\cdot\nabla)U^\ell$, so
\[
\Omega^\ell\cdot(\Omega^\ell\cdot\nabla)U^\ell
=S^\ell\Omega^\ell\cdot\Omega^\ell.
\]
This proves \eqref{eq:filtered-enstrophy-identity}.
\end{proof}

\subsection{Scale-invariant filtered stretching quantities}

Let $\chi_{r,z_0}$ be a nonnegative cutoff supported in $Q_r(z_0)$ and equal to one on a slightly smaller cylinder, with standard parabolic derivative bounds. For $0<\ell\le c r$ define
\begin{align}
\mathsf O_{r,\ell}(z_0)&=r^{-1}\iint_{Q_r(z_0)}\chi_{r,z_0}|\Omega^\ell|^2\dxdt,\label{eq:filtered-O}\\
\mathsf P_{r,\ell}(z_0)&=r\iint_{Q_r(z_0)}\chi_{r,z_0}|\nabla\Omega^\ell|^2\dxdt,\label{eq:filtered-P}\\
\mathsf V^+_{r,\ell}(z_0)&=r\iint_{Q_r(z_0)}\chi_{r,z_0}(S^\ell\Omega^\ell\cdot\Omega^\ell)_+\dxdt,\label{eq:filtered-V}\\
\mathsf R_{r,\ell}(z_0)&=r\iint_{Q_r(z_0)}\chi_{r,z_0}|\Omega^\ell|\,|\mathcal J^\ell|\dxdt.\label{eq:filtered-R}
\end{align}
The cutoff terms generated from \eqref{eq:filtered-enstrophy-identity} are denoted by $\mathsf L_{r,\ell}(z_0)$. These quantities are scale invariant when the filter is chosen relatively, for example $\ell=\sigma r$.

A schematic direction-incoherence defect may be taken as follows. Let
\[
\xi^\ell=\frac{\Omega^\ell}{|\Omega^\ell|}
\]
where $\Omega^\ell\ne0$, with a harmless regularization on the zero set. One model defect is
\begin{equation}\label{eq:direction-defect}
\mathsf A_{r,\ell}(z_0)
=
\int_{t_0-r^2}^{t_0}\iint_{B_r\times B_r}
K_{r,\ell}(x,y)
|\xi^\ell(x,t)-\xi^\ell(y,t)|
|\Omega^\ell(x,t)|^2|\Omega^\ell(y,t)|\dx\,dy\,dt,
\end{equation}
where $K_{r,\ell}$ is a scale-normalized singular kernel chosen to match the Biot--Savart or strain representation. The precise kernel is part of the theorem design; the point is that strong stretching should force either diffusion or measurable directional decorrelation. This is aligned with the classical vorticity-direction philosophy of Constantin and Fefferman \cite{ConstantinFefferman1993}.

\subsection{The first hard lemma}

The first new PDE lemma to attack is the following.

\begin{problem}[First hard lemma: stretching--diffusion depletion]\label{prob:first-hard-lemma}
For normalized suitable weak solutions with $\Phi(z_0,r)\le M$ and for relative filters $\ell=\sigma r$, prove constants $\eps_*>0$ and $C(M)<\infty$ such that
\begin{equation}\label{eq:first-hard-lemma}
\mathsf V^+_{r,\ell}(z_0)
\le
(1-\eps_*)\mathsf P_{r,\ell}(z_0)
+C(M)\mathsf O_{r,\ell}(z_0)
+C\mathsf A_{r,\ell}(z_0)
+\mathsf R_{r,\ell}(z_0)
+\mathsf L_{r,\ell}(z_0).
\end{equation}
\end{problem}

The result should not be advertised as a regularity theorem. Its value is more basic: it would say that persistent positive stretching cannot be a silent source of badness. If \eqref{eq:first-hard-lemma} fails, the failure itself should identify a concrete obstruction class: high-frequency subgrid vorticity forcing, direction incoherence, caloric leakage, pressure-compatible transfer, or a coherent filtered Euler-like cascade.

\subsection{Stretching-dominant bad-scale counting}

Let $r_k=2^{-k}r_0$ and choose $\ell_k=\sigma r_k$. Define a stretching-dominant bad scale by
\begin{equation}\label{eq:stretching-bad-scale}
k\in\calB_{\rm str}
\quad\Longleftrightarrow\quad
\mathsf V^+_{r_k,\ell_k}
>
\lambda\mathsf P_{r_k,\ell_k}
+C(M)\mathsf O_{r_k,\ell_k}
+\mathsf D_{r_k,\ell_k},
\end{equation}
where $\mathsf D_{r_k,\ell_k}$ collects direction, subgrid, pressure-tail, and leakage defects.
The counting target is
\begin{equation}\label{eq:stretching-counting-target}
\sum_{k=0}^{N-1}
\left[
\mathsf V^+_{r_k,\ell_k}
-
\lambda\mathsf P_{r_k,\ell_k}
-
C(M)\mathsf O_{r_k,\ell_k}
\right]_+
\le
C(M)+
\sum_{k=0}^{N-1}\mathsf D_{r_k,\ell_k}.
\end{equation}
This is the corrected use of finite-chain language: count stretching-dominant scales, not abstract detector-bad scales.

A corresponding singularity alternative would be
\begin{equation}\label{eq:stretching-necessary-alternative}
(0,0)\text{ singular}
\quad\Longrightarrow\quad
\limsup_{k\to\infty}
\frac{\mathsf V^+_{r_k,\ell_k}}
{\mathsf P_{r_k,\ell_k}+\mathsf O_{r_k,\ell_k}+\mathsf D_{r_k,\ell_k}}
\ge c_0
\end{equation}
for some relative filter choices, or a positive-density dyadic version. This is a mechanism statement. It says that a true singularity must repeatedly pass through a stretching-dominant regime unless one of the explicit silent mechanisms carries the recurrence.

\subsection{How this reuses the existing calculus}

The previous finite-window machinery should now be used only downstream. Once a stretching-dominant slab, a direction-incoherent slab, or a subgrid-transfer slab has been identified, the detector calculus can quantify it, compare localized and clean representatives, and track leakage through a finite chain. But the primary object should be the PDE mechanism in \eqref{eq:filtered-enstrophy-identity}. The revised order is
\begin{equation}\label{eq:stretching-order}
\begin{gathered}
\text{filtered vorticity mechanism}
\Longrightarrow
\text{stretching--diffusion estimate}\\
\Longrightarrow
\text{bad-scale counting}
\Longrightarrow
\text{finite-window certification}.
\end{gathered}
\end{equation}

\section{From detector geometry to mechanism geometry}\label{sec:mechanism}

\subsection{Renormalized recurrence}

Let $r_n=2^{-n}$ and define the renormalized orbit
\[
(u^{(n)},p^{(n)})
=(r_nu(r_nx,r_n^2t),r_n^2p(r_nx,r_n^2t)).
\]
A persistent non-CKN branch is a recurrent orbit in a bad subset of a critical state space. The more intrinsic question is not which finite detector sees the orbit, but what nonlinear mechanism keeps rebuilding its badness. After the correction in \cref{sec:stretching-route}, the leading candidate mechanism is filtered vortex stretching; pressure work and resolved energy flux should be treated as companion channels rather than as the primary detector.

\subsection{Sustaining diagnostics}

For a relative filter scale $\ell\in(0,1)$ define the coarse flux activity
\begin{equation}\label{eq:flux-diagnostic}
F_{n,\ell}^{\rm abs}
=
\int_{Q_1}\chi|\Pi_{n,\ell}|\dxdt,
\qquad
F_{n,\ell}^{\rm sgn}
=
\abs{\int_{Q_1}\chi\Pi_{n,\ell}\dxdt},
\end{equation}
and the coherence ratio
\begin{equation}\label{eq:coherence-ratio}
\Gamma_{n,\ell}
=
\frac{F_{n,\ell}^{\rm sgn}}{F_{n,\ell}^{\rm abs}+\eps}.
\end{equation}
Split the pressure locally and define active pressure work
\begin{equation}\label{eq:pressure-diagnostic}
P_n^{\rm act}
=
\abs{\int_{Q_1}p_n^{\rm act}u^{(n)}\cdot\nabla\chi\dxdt}.
\end{equation}
For the filtered vorticity $\Omega_{n,\ell}=\nabla\times U_{n,\ell}$ and strain $S_{n,\ell}$ define
\begin{equation}\label{eq:vortex-diagnostic}
V_{n,\ell}^+
=
\int_{Q_1}\chi
\bigl(\Omega_{n,\ell}\cdot S_{n,\ell}\Omega_{n,\ell}\bigr)_+\dxdt,
\end{equation}
with an additional subgrid vorticity-transfer term if desired.

\begin{conjecture}[Necessary mechanism conjecture]\label{conj:mechanism}
If
\[
\Psi(r_n)\ge\eps_{\CKN}
\]
for all sufficiently large $n$, then there exist relative filter scales $\ell_n$ in a compact subinterval of $(0,1)$ and a subsequence such that
\begin{equation}\label{eq:mechanism-conjecture}
\limsup_{n\to\infty}
\left(
V_{n,\ell_n}^+
+P_n^{\rm act}
+F_{n,\ell_n}^{\rm abs}
\right)>0.
\end{equation}
\end{conjecture}

This conjecture is stronger in explanatory value than a detector lower bound. It asks for a source of recurrence. Flux-dominated recurrence then splits into sign-coherent and oscillatory cancellation regimes according to $\Gamma_{n,\ell_n}$. Pressure-dominated recurrence requires active, not merely harmonic, pressure focusing. Stretching-dominated recurrence asks whether persistent strain-vorticity alignment can coexist with filtered diffusion, subgrid vorticity transfer, pressure compatibility, and vorticity-direction coherence.

\subsection{Relationship with the existing detector calculus}

Detector geometry remains useful as a secondary language. Once a sustaining mechanism is identified, finite-window observations can quantify it, the work ledger can test whether it depletes energy, and quotient methods can separate physical activity from gauge and localization artifacts. The order should therefore be
\begin{equation}\label{eq:new-order}
\text{mechanism extraction}
\Longrightarrow
\text{PDE estimate}
\Longrightarrow
\text{finite-window certification},
\end{equation}
not the reverse.

\section{A conditional closure theorem}\label{sec:conditional-closure}

We now state a precise finite-chain theorem showing how the missing estimates would close the work-depletion route. The theorem is abstract but non-tautological: its hypotheses correspond directly to the new estimates identified above.

Let
\[
\calB_N=\{0\le k\le N-1:\Psi(r_k)\ge\eps_{\CKN}\}
\]
be the CKN-bad scales. Let $\calE_N\subset\calB_N$ be the exceptional silent scales, including unresolved residual, uncontrolled harmonic tail, or classified coherent profiles. Let $W_k^+$, $W_k^-$, $L_k$, $D_k$, and $w_k$ be the quantities in the fixed-chain combined-work depletion inequality \eqref{eq:work-depletion}.

\begin{assumption}[Forward extraction off the silent set]\label{ass:forward-extraction}
There is $c_0>0$ such that
\begin{equation}\label{eq:forward-extraction}
W_k^+\ge c_0
\qquad
\text{for every }k\in\calB_N\setminus\calE_N.
\end{equation}
\end{assumption}

\begin{assumption}[Silent-set budget]\label{ass:silent-budget}
There is $C_{\rm sil}<\infty$, independent of $N$, such that
\begin{equation}\label{eq:silent-budget}
\sum_{k\in\calE_N}w_k\le C_{\rm sil}.
\end{equation}
\end{assumption}

\begin{assumption}[Leakage and backscatter absorption]\label{ass:absorption}
There exist $0\le\gamma<1$ and $C_{\rm err}<\infty$, independent of $N$, such that
\begin{equation}\label{eq:absorption}
\sum_{k=0}^{N-1}w_k\bigl(|L_k|+W_k^-\bigr)
\le
\gamma\sum_{k\in\calB_N\setminus\calE_N}w_kW_k^+
+C_{\rm err}.
\end{equation}
\end{assumption}

\begin{theorem}[Conditional finite-chain CKN closure]\label{thm:conditional-closure}
Under Assumptions~\ref{ass:forward-extraction}--\ref{ass:absorption},
\begin{equation}\label{eq:bad-weight-bound}
\sum_{k\in\calB_N}w_k
\le
C_{\rm sil}
+
\frac{\calE_0+C_{\rm err}}{(1-\gamma)c_0}.
\end{equation}
Consequently, if the total scale weight satisfies
\begin{equation}\label{eq:divergent-total-weight}
\sum_{k=0}^{N-1}w_k\longrightarrow\infty
\qquad\text{as }N\to\infty,
\end{equation}
then an indefinitely persistent non-CKN branch is impossible: some sufficiently small scale is CKN-small.
\end{theorem}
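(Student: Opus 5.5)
The plan is to combine the fixed-chain work-depletion inequality \eqref{eq:work-depletion} with the three assumptions in a short chain of inequalities. Write $\calG_N=\calB_N\setminus\calE_N$ for the good bad scales and set
\[
X=\sum_{k\in\calG_N}w_kW_k^+.
\]
Since $W_k^+\ge0$ and $D_k\ge0$ (resolved dissipation), we drop $D_k$ and all forward work off $\calG_N$ from the left side of \eqref{eq:work-depletion}. This gives
\[
X\le\sum_{k=0}^{N-1}w_k(W_k^++D_k)\le\calE_0+\sum_{k=0}^{N-1}w_k\bigl(|L_k|+W_k^-\bigr).
\]
Next, insert \cref{ass:absorption} to get $X\le\calE_0+\gamma X+C_{\rm err}$. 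Because $X$ is a finite sum of finite quantities on a fixed finite chain, we may absorb: $(1-\gamma)X\le\calE_0+C_{\rm err}$.

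Then use \cref{ass:forward-extraction}: on $\calG_N$ we have $W_k^+\ge c_0$, so $c_0\sum_{k\in\calG_N}w_k\le X$, and hence
\[
\sum_{k\in\calG_N}w_k\le\frac{\calE_0+C_{\rm err}}{(1-\gamma)c_0}.
\]
Split $\sum_{k\in\calB_N}w_k=\sum_{k\in\calG_N}w_k+\sum_{k\in\calE_N}w_k$ and bound the second piece by $C_{\rm sil}$ via \cref{ass:silent-budget}. This gives \eqref{eq:bad-weight-bound}.

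For the consequence, suppose that $\Psi(r_k)\ge\eps_{\CKN}$ for all $k\ge k_0$. Then $\calB_N\supset\{k_0,\dots,N-1\}$, so
\[
\sum_{k\in\calB_N}w_k\ge\sum_{k=0}^{N-1}w_k-\sum_{k<k_0}w_k.
\]
The subtracted sum is a fixed finite quantity, so by \eqref{eq:divergent-total-weight} the left side tends to infinity. This contradicts the $N$-uniform bound \eqref{eq:bad-weight-bound}. Here we need that $\calE_0$, $C_{\rm sil}$, $C_{\rm err}$, $c_0$, $\gamma$ do not depend on $N$, which the assumptions stipulate (and $\calE_0$ is the first-slab budget). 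Hence infinitely many $k$, in particular arbitrarily small scales, are CKN-small.

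The main obstacle is bookkeeping rather than analysis. First, we must check that the quantities in \eqref{eq:work-depletion} are the same on every chain length $N$, i.e.\ that the chains are nested prefixes, so the contradiction argument is legitimate. Second, we must confirm the nonnegativity of $D_k$ and finiteness of $X$ used in the absorption step. A caution is that with $w_k=r_k/r_0=\theta^k$ the total weight is summable, so \eqref{eq:divergent-total-weight} fails for that weighting. The consequence is therefore genuinely conditional on a choice of weights making the total weight diverge, and the proof should state the second part only under that hypothesis.
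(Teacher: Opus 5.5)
Your proposal is correct and follows the paper's proof step for step. You chain the work-depletion inequality with the absorption hypothesis, absorb the $\gamma$-term, apply forward extraction, add the silent-set budget, and then argue by contradiction against the divergent total weight; your extra care about the initial segment $k<k_0$, the nonnegativity of $D_k$, nested chains, and the summability of $w_k=r_k/r_0$ makes explicit what the paper leaves implicit or records in its remark on the choice of weights.
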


\begin{proof}
The work-depletion inequality \eqref{eq:work-depletion} and \eqref{eq:absorption} give
\begin{align*}
\sum_{k\in\calB_N\setminus\calE_N}w_kW_k^+
&\le
\sum_{k=0}^{N-1}w_k(W_k^++D_k)\\
&\le \calE_0+
\sum_{k=0}^{N-1}w_k(|L_k|+W_k^-)\\
&\le \calE_0+
\gamma\sum_{k\in\calB_N\setminus\calE_N}w_kW_k^+
+C_{\rm err}.
\end{align*}
Hence
\[
(1-\gamma)
\sum_{k\in\calB_N\setminus\calE_N}w_kW_k^+
\le \calE_0+C_{\rm err}.
\]
Using \eqref{eq:forward-extraction},
\[
(1-\gamma)c_0
\sum_{k\in\calB_N\setminus\calE_N}w_k
\le \calE_0+C_{\rm err}.
\]
Add the silent-set bound \eqref{eq:silent-budget} to obtain \eqref{eq:bad-weight-bound}. If every sufficiently small scale were bad, then the left side would have the same divergent asymptotic weight as \eqref{eq:divergent-total-weight}, contradicting \eqref{eq:bad-weight-bound}.
\end{proof}

\begin{remark}[Why the theorem matters]
The theorem isolates the exact closure architecture:
\begin{align*}
\text{CKN badness}
&\xRightarrow{\text{extraction/classification}}
\text{forward work outside a controlled silent set},\\
&\xRightarrow{\text{leakage/backscatter absorption}}
\text{finite bad-scale weight}.
\end{align*}
The finite-window structures already provide the bookkeeping around these arrows. The new mathematics lies in proving the three hypotheses for an intrinsic NS-generated class.
\end{remark}

\begin{remark}[Choice of weights]
The weights $w_k=r_k/r_0$ in the fixed-slab work theorem have finite total mass on a geometric chain. Therefore \eqref{eq:divergent-total-weight} does not hold for those weights as $N\to\infty$. To derive an infinite-chain conclusion one needs either a renormalized weight system with divergent total mass, a positive-density unweighted extraction, or an interval formulation that yields a fixed amount of depletion per bad scale. This is another example of the distinction between a correct finite-chain theorem and a scale-uniform regularity mechanism.
\end{remark}

The last remark is important. The conditional theorem does not hide the weighting problem. It shows that even after detector extraction, one must design the chain normalization so that persistent badness creates a non-summable demand on a finite budget.

\section{Conclusion and outlook}\label{sec:conclusion-outlook}\label{sec:agenda}

This paper gives a structural audit of several finite-scale decompositions arising in the local regularity theory of the three-dimensional Navier--Stokes equations. The main conclusion is negative in a useful sense: the finite-window detector constructions considered here do not by themselves provide a coercive mechanism for excluding CKN badness. Their role is instead to separate resolved, pressure, flux, leakage, and subgrid contributions and to identify where a genuinely new PDE estimate would have to enter.

The components that remain mathematically useful are the finite-scale critical ledger, the separation between resolved and unresolved contributions, the fixed finite-window quotient estimate, the combined pressure--flux work identity, and the filtered vorticity formulation. These tools provide a precise bookkeeping framework for locating possible obstruction channels. They should not, however, be interpreted as a substitute for coercive control of the positive stretching term, the subgrid vorticity forcing, or the remaining leakage and backscatter channels.

The next step is therefore not to further refine the detector language, but to prove an estimate that acts directly on the sustaining mechanism of possible bad scales. Two natural directions are a localized filtered stretching--diffusion surplus estimate controlling the positive vortex-stretching contribution after pressure and flux transfer are accounted for, and a compactness--rigidity theorem showing that any minimal obstruction profile is incompatible with the Navier--Stokes local energy structure. Either result would turn the present audit framework from a decomposition theory into a genuine regularity or obstruction-exclusion mechanism.

\end{document}